\DeclarePairedDelimiter\floor{\lfloor}{\rfloor}
\newcommand{\normcl}[3]{|#1|_{C^{#2}(#3)}}
\newcommand{\normf}[3]{\left|\left|#1\right|\right|_{#2,#3}}
\newcommand{\teta}{\theta}
\newcommand{\eps}{\varepsilon}
\newcommand{\smooth}[1]{\mathtt{#1}_s}
\newcommand\ledot{\mathrel{\ensurestackMath{%
			\stackengine{-.5ex}{\lessdot}{-}{U}{c}{F}{F}{S}}}} 
\newcommand\gedot{\mathrel{\ensurestackMath{%
			\stackengine{-.5ex}{\gtrdot}{-}{U}{c}{F}{F}{S}}}}
\newcommand{\comment}[1]{}
    \newcommand{\set}[1]{{\left\{#1\right\}}}
\newcommand{\pa}[1]{{\left(#1\right)}}
\newcommand{\abs}[1]{{\left|#1\right|}}
\newcommand{\norm}[1]{{\left |#1\right |}}
\newcommand{\A}{\mathbb{A}}
\newcommand{\T}{\mathbb{T}}
\newcommand{\Z}{\mathbb{Z}}
\newcommand{\R}{\mathbb{R}}
\newcommand{\C}{\mathbb{C}}
\renewcommand{\Re}{\operatorname{Re}}
\renewcommand{\Im}{\operatorname{Im}}
\newcommand{\co}[1]{\textit{#1}}
\newcommand{\gr}[1]{\textbf{#1}}
\newcommand{\id}{\operatorname{id}}
\newtheorem{prop}{Proposition}[section]
    \newtheorem{thm}{Theorem}[section]
    \newtheorem*{thm*}{Theorem}
    \newtheorem*{cor*}{Corollary}
    \newtheorem{lemma}{Lemma}
    \theoremstyle{remark}
\newtheorem{rmk}{Remark}[section]
\theoremstyle{definition}
\newtheorem{defn}{Definition}
\numberwithin{equation}{section}
\numberwithin{thm}{section}
\numberwithin{defn}{section}
\numberwithin{prop}{section}
\numberwithin{cor}{section}
\numberwithin{lemma}{section}
\numberwithin{rmk}{section}
\newcommand{\s}{{\sigma}}
\newcommand{\om}{{\omega}}
\newcommand{\F}{{\mathbb F}}
\newcommand{\N}{{\mathbb N}}
\newcommand{\cA}{{\mathcal A}}
\newcommand{\cB}{{\mathcal B}}
\newcommand{\cD}{{\mathcal D}}
\newcommand{\cI}{{\mathcal I}}
\newcommand{\cM}{{\mathcal M}}
\newcommand{\cN}{{\mathcal N}}
\newcommand{\cR}{{\mathcal R}}
\newcommand{\cZ}{{\mathcal Z}}
\newcommand{\fk}{{\mathfrak{k}}}
\newcommand{\tf}{{\mathtt{f}}}
\newcommand{\tg}{{\mathtt{g}}}
\newcommand{\tC}{{\mathtt{C}}}
\newcommand{\tH}{{\mathtt{H}}}
\newcommand{\tI}{{\mathtt{I}}}
\newcommand{\tR}{{\mathtt{R}}}
\newcommand{\bM}{{\bf M}}
\newcommand{\al}{{\alpha}}
\newcommand{\alg}{{\bm{\alpha}}}
\renewcommand{\d}{\partial}
\newcommand\norma[1]{\left\lVert#1\right\rVert}
\newcommand{\im}{{\rm i}}
\newcommand{\nnorm}[1]{{\left\vert\kern-0.25ex\left\vert\kern-0.25ex\left\vert #1 
    \right\vert\kern-0.25ex\right\vert\kern-0.25ex\right\vert}}
\newcommand{\La}[1]{\Lambda^{\pa{#1}}}
\newcommand{\bal}{{\bm \al}}
\newcommand{\pallara}{B_\infty(0,R)}
\newcommand{\pallaradue}{B_\infty(0,2R)}
\newcommand{\pallaro}{\pallara \times \T^n}
\newcommand{\jack}{\tC_{\mathtt{ J}}}
\newcommand{\jess}{\tC_A}
\newcommand{\santi}{\tC_B}
\newcommand{\jeanpierre}{\tC_{\mathtt{ F}}}
\newcommand{\ciccio}[1]{\cD_{#1}}
\newcounter{paraga}[subsection]
\renewcommand{\theparaga}{{\bf\arabic{paraga}.}}
\newcommand{\paraga}{\medskip \addtocounter{paraga}{1} 
\noindent{\theparaga\ } }
\def\RR{{\bf R}}
\def\TT{{\bf T}}
\def\EE{{\bf E}}
\def\dsp{\displaystyle}
\newcommand{\norms}[1]{\vert #1\vert}
\newcommand{\dr}{\partial}
\newcommand{\alp}{\alpha}
\def\ov{\overline}
\def\ha{\widehat}
\def\beq{\begin{equation}}
\def\eeq{\end{equation}}
\def\jS{{\mathcal S}}
\def\f{{\bf f}}
\def\F{{\bf F}}
\def\bT{{\bf T}}
\def\bW{{\bf W}}
\def\dst{\displaystyle}
\def\dsp{\displaystyle}
\def\setm{\setminus}
\def\th{\theta}
\def\La{\Lambda}
\def\vect{\rm Vect\,}
\def\demi{\frac{1}{2}}
\def\cB{{\mathcal B}}
\def\La{\Lambda}
\begin{document}

 \title[]{{Analytic Smoothing and Nekhoroshev estimates for H\"older steep Hamiltonians}}
\author{Santiago  Barbieri}
\address{Université Paris-Saclay and Università degli Studi Roma Tre}
\email{santiago.barbieri@universite-paris-saclay.fr }

\author{Jean-Pierre Marco}
\address{Sorbonne Université }
\email{jean-pierre.marco@imj-prg.fr }

\author{Jessica Elisa Massetti}
\address{Università degli Studi Roma Tre}
\email{jessicaelisa.massetti@uniroma3.it}

\begin{abstract}
In this paper we prove the first result of  Nekhoroshev stability for steep Hamiltonians in H\"older class. 
Our new approach combines the classical theory of normal forms in analytic category with an improved smoothing procedure to approximate an H\"older Hamiltonian with an analytic one.
 It is only for the sake of clarity that we consider the (difficult) case of H\"older perturbations of an analytic
integrable Hamiltonian, but our method is flexible enough to work in many other functional classes,
including the Gevrey one. The stability exponents can be taken 
to be $(\ell-1)/(2n\alg_1...\alg_{n-2})+1/2$ for the time of stability and  $1/(2n\alg_1...\alg_{n-1})$ for the radius of stability, 
$n$ being the dimension, $\ell >n+1$ being the regularity and the $\bal_i$'s being the indices of steepness.    Crucial to obtain the exponents above is a new non-standard estimate on the Fourier norm of the smoothed function.  As a byproduct we improve the stability exponents in the $C^k$ class,  with integer $k$.  
\end{abstract} 
\maketitle
\setcounter{tocdepth}{1} 
\tableofcontents

\section{Introduction and main results}



\paraga The main goal of this work is to introduce a unified way for proving ``long time stability'' of the action variables for perturbations of completely
integrable Hamiltonian systems which belong to a large class of function spaces. We will limit ourselves here to H\"older  
perturbations of analytic systems, but our method is flexible enough to be adapted to many other settings\footnote{Assuming that the unperturbed 
system is analytic is just a matter of simplification.  }.


The effective stability theory for nearly-integrable hamiltonian systems was initiated by the pioneering work of J.E. Littlewood 
\cite{Littlewood_1959} and 
reached a first main achievement in the seventies with the work of N.N. Nekhoroshev  \cite{Nekhoroshev_1977}; it was then developed by many authors. 
The usual setting is that of Hamiltonian systems  of the form
\begin{equation}\label{eq:hampert}
H(I,\teta)=h(I)+ f(I,\theta),
\end{equation}
where $(I,\theta)\in\R^n\times \T^n$ are the action-angle variables and $f$ is small with respect to $h$. In Nekhoroshev's work the Hamiltonian $H$
is analytic and $h$ satisfies a steepness condition (see definition \ref{def steep} below). The theory has been then developed 
in various settings:  $H$ can be assumed to be Gevrey (which includes the analytic case) or $C^k$ with $k\geq 2$ and integer, 
while $h$ can be assumed to be convex or quasi-convex (see for example  \cite{Marco_Sauzin_2003} or \cite{Bounemoura_2010}) 

The norm of $f$, relative to the function space at hand, is denoted by $\eps$.
For systems as (\ref{eq:hampert}), the previous results assert that
the action variables are confined in a ball of radius $\RR(\eps)$ centered at the initial action
during a time $\TT(\eps)$, provided that $\eps$ is smaller that some threshold $\EE$.
We say that $\RR(\eps)$ is the {\it confinement radius}, $\TT(\eps)$ is the {stability time and 
$\EE$ is the {\it applicability threshold}. The remarkable fact is that 
{-- $h$ being given --} the results depend only on the norm of $f$ and not on its particular form. 

Much attention has been paid in the literature in order to obtain good estimates for the quantities $\RR(\eps)$ and $\TT(\eps)$ in the different frameworks. As we shall see in the sequel, in the setting of Hölder perturbations of analytic integrable systems, the method we introduce in this paper yields sharper estimates than those that are found in the literature up to now. Before stating rigorously our results, however, it is useful to have an overview of the classical results on the effective stability of near-integrable Hamiltonian systems.
%

\paraga {\gr{The classical results.}} Let us briefly describe the classical abstract results.
 In the 70's  Nekhoroshev  proved his seminal theorem \cite{Nekhoroshev_1977}, which asserts that for a steep
real-analytic function $h$ and for any real-analytic perturbation $f$ with analytic extension to a complex domain $\cD$, 
all solutions are stable at least over exponentially long time intervals. Namely,
there exist positive {\it exponents} $a$, $b$ and a positive threshold $\EE$, depending only on $h$, 
such that if $\norm{f}_{\cD}\leq \EE$, then any initial condition
$(I_0,\theta_0)$ gives rise to a solution $\big(I(t),\theta(t)\big)$ which is defined at least for $\abs{t}\leq \exp\big(c(1/\eps)^a\big)$ and satisfies 
{$\norm{I(t)-I_0}\leq C\eps^b$}
 in that range. Here $\norm{f}_{\cD}$ is the $C^0$ sup-norm on the domain $\cD$ and 
$c$, $C$ are positive constants which also depend only on $h$. With our notation, for these systems:
\begin{equation}\label{eq:Nekhconstants0}
\TT(\eps)=\exp(c(1/\eps)^a),\qquad \RR(\eps)=C\eps^b,
\end{equation}
while the expression of the threshold $\EE$ is quite difficult to obtain explicitly\footnote{Thresholds have been studied more extensively in applications to celestial mechanics, see e.g. \cite{Niederman_1996} or \cite{Barbieri_Niederman_2020}}, see  \cite{Nekhoroshev_1977}.
Since the constants $c$ and $C$ are less significant than the exponents we will 
get rid of them in our subsequent description.

Nekhoroshev's proof is based on the construction of a partition (a ``patchwork") of the phase space into zones of 
approximate resonances of different multiplicities, over which one can construct adapted normal forms.
The global stability result necessitates a very delicate control of the size and disposition of the elements 
of the patchwork in order to produce a ``dynamical confinement'' preventing the orbits from fast motions along distances
larger than the confinement radius (see below for a discussion).

In the convex case, as noticed in \cite{Gallavotti_1986} and \cite{Benettin_Gallavotti_1986},
a shrewd use of energy conservation leads to a much simpler and ``physical'' way to confine the orbits. 
This gave rise to two distinct series of works, originating in the articles of Lochak \cite{Lochak_1993} 
- where the simultaneous approximation method was introduced - and P\" oschel \cite{Poschel_1993} - where the construction of
Nekhoroshev's patchwork was made much easier - both relying on the convexity or quasi-convexity of the integrable Hamiltonian.

The simplicity of these methods made it possible to
prove that
the Nekhoroshev Theorem in the analytic case holds with 
\begin{equation}\label{eq:Nekhconstants}
\TT(\eps)=\exp(c(1/\eps)^{1/2n}),\qquad \RR(\eps)=C\eps^{1/2n},
\end{equation}
if $h$ is assumed to be quasi-convex (see \cite{Lochak_1993, Lochak_Neishtadt_Niederman_1994, Poschel_1993}).
Moreover, besides the global result, one can state local results for neighborhoods of resonant surfaces. For
$m\in\{1,\ldots,n-1\}$, consider
a sublattice $\Lambda\in \Z^n_K:=\{k\in\Z^n:|k|_1\le K\}$ of rank $m$ and
the  resonant subset $\cM_\Lambda:=\{I\in\R^n\mid \nabla h(I)\in \Lambda^{\bot}\}$. Then, for all trajectories starting at a distance of
order~$\eps^{1/2}$ of $\cM_\Lambda$, one gets larger stability exponents,
namely $a=b=1/(2(n-m))$. Moreover, in the resonant block $\cB_\Lambda$ (which is obtained by eliminating from $\cM_\Lambda$ all the intersections with other resonant subsets $\cM_{\Lambda'}$, with $\text{rank }\Lambda'=m+1$) one can even take $a=1/(2(n-m)),\ b=1/2$.

As alluded to above, long time stability does not require {\it a priori} the analyticity of the Hamiltonian
at hand. For general Gevrey quasi-convex systems\footnote{See \cite{Marco_Sauzin_2003} for the definition.}, the fast decay of the Fourier
coefficients also yields exponentially long stability times. Namely, for $\beta$-Gevrey systems (where $\beta$ is the Gevrey exponent) it is proved in  \cite{Marco_Sauzin_2003} that
$$
\TT(\eps)=\exp\big(c/\eps^{1/(2n\beta)}\big),\qquad \RR(\eps)=C\eps^{1/(2n\beta)}.
$$
 The proof is based on a direct construction of normal
forms for Gevrey systems. This study was initiated by M. Herman for proving the optimality of the stability exponents by constructing explicit
examples taking advantage of the flexibility of the Gevrey category, see below.

Soon after, finitely differentiable systems have been investigated in \cite{Bounemoura_2010} using a {\it direct} implementation of Lochak's scheme 
in this setting, which yields the estimates
$$
\TT(\eps)=c/\eps^{(\ell-2)/(2n)}\qquad \RR(\eps)=C\eps^{1/(2n)}
$$
for quasi-convex $C^\ell$ systems with $\ell\geq2$ and integer. On the other hand, the stability of $C^\ell$ systems, with $\ell$ an integer such that $\ell\ge \ell^*n+1$ for some suitable $\ell^*\ge 1, \ell^*\in\N$, satisfying a property known as Diophantine-Morse condition\footnote{The Diophantine-Morse property is a special case of the Diophantine-steep condition introduced in \cite{Niederman_2007}\label{nota} which, in turn, is a prevalent condition on integrable systems that ensures long time stability once these are perturbed. All steep functions are Diophantine-steep.}, was investigated in \cite{Bounemoura_2011}, where the values
$$
\TT(\eps)=c/\eps^{\ell^*/[3(4(n+1))^n]}\qquad \RR(\eps)=C\eps^{1/(4(n+1))^n}
$$ 
were found. 

The case $\ell=+\infty$ has been studied in \cite{Bambusi_Langella_2020}, where the authors find that, in the case $h(I)=I^2/2$ and for fixed $b\in(0,1/2)$, for any $M>0$ there exists $C_M>0$ such that
$$
\TT(\eps)=\frac{C_M}{\eps^M}\qquad \RR(\eps)=C_M\eps^{b}\ .
$$ 
The result is achieved by implementing an innovative {\it global} normal form in Pöschel's framework. 

Finally, we also refer to the recent work \cite{Fejoz_Bounemoura_2018}
and references therein for much more information about stability in various functional classes.

\paraga {\gr{Purpose of the work.}} The objective of this paper is to make a systematic use of 
analytic smoothing methods to 
derive normal forms in a very simple way - whatever the regularity of the Hamiltonians at hand - from the usual analytic ones.
This way we  get maximal flexibility to adapt the different long-time stability proofs
to a large class of function spaces.  We will investigate here only the case of  H\"older differentiable Hamiltonians,
but our method extends to any steep functions belonging to any regularity class which admits an analytic smoothing. 
More precisely, the proposed strategy (see Section \ref{strategia parchetto}) allows us to prove, in a very simple way, the first 
Nekoroshev-type result of stability for H\"older steep Hamiltonians with presumed sharp exponents\footnote{Sharpness has the same
meaning as in \cite{Guzzo_Chierchia_Benettin_2016}, i.e. these are the best values of the exponents for ${\bf T}(\eps)$ and ${\bf R }(\eps)$ that one can obtain with these techniques.}. In this case one cannot expect to get more than polynomial stability times 
relative to the size $\eps$ of the perturbation \cite{Bounemoura_2010}. In the course of the proof we need to adjust in a rather 
unusual way the size of the various  parameters: ultraviolet cutoff and, in an essential way, the analyticity width, as a function of  the size $\eps$ of the 
perturbation.

\paraga {\gr{Main results}.} Let us fix the main definitions and assumptions. In the following, given $\nu\in\set{1,\ldots,\infty}$, we denote by $|\cdot|_\nu$ the corresponding $\ell^\nu$-norm in $\R^n$ or $\C^n$.
	{We denote by $ B_\nu(I_0,R)$ the open ball centered at $I_0$ of radius $R$ for the norm $\norm{\, \cdot\,}_\nu$ {in $\R^n$}.} 
	
Consider a Hamiltonian of the form (\ref{eq:hampert}),  {where we assume, {\it for the sake of simplicity}, that the unperturbed part $h$ is analytic\footnote{As we will see in the course of the proof, assuming that $h$ is Hölder with large enough exponents would be enough, see Section~\ref{ssec:easy}} while only the perturbation $f$ is Hölder, so:}
\begin{equation}\label{hef}
h\in C^\omega(\pallara_{\rho_0}),
\qquad f\in C^\ell(\pallara\times\T^n),
\end{equation}
where
$\pallara_{\rho_0}$ is the complex extension of analyticity width $\rho_0\ge 1$ of $\pallara$, and $\ell\in (1,+\infty)$ (meaning that $f$ is H\"older differentiable
when $\ell$ is not an integer, see section \ref{func_setting} for a brief overview on this class of functions). The small parameter is
\begin{equation}\label{eps}
\eps:=
|f|_{C^\ell(\pallara\times\T^n)},
\end{equation}
(see \eqref{norma holder} for a definition of the Hölder norm). We denote by $\om=\nabla h:\R^n\to\R^n$ the action-to-frequency map attached to $h$.

\vskip1mm

We will assume that the Hessian of $h$ is uniformly bounded from above:
\begin{equation}
\label{M}
M := \sup_{I\in \pallara_{\rho_0}} \norma{D^2 h(I)}_{op} 
< \infty,
\end{equation}
where $\norma{\ }_{op} $ stands for the operator norm induced by the 
Hermitian norm on $\C^n$. 

\vskip1mm

We will also assume that the Hamiltonian $h$ is  steep
according to the following definition. 
\begin{defn}[Steepness]\label{def steep} Fix $\delta>0$.
A $C^1$ function $h: B_\infty(0, R+\delta)\to\R$ is \co{steep} with \co{steepness indices} $\alg_1,\ldots,\alg_{n-1}\ge 1$ and \co{steepness coefficients} $C_1,\ldots, C_{n-1}, \delta$ if:
\begin{enumerate}
\item $\inf_{B_\infty(0,R)} \abs{\om(I)}_2 > 0$;
\vskip1mm
\item for any $I \in B_\infty(0,R)$ and any $m$-dimensional subspace $\Gamma$ orthogonal to $\om(I)$, with $1\le m < n$:
\begin{equation}
\max_{0\le \eta \le \xi} \,  \min_{u\in\Gamma,\abs{u}_2=\eta} \abs{\pi_\Gamma\om(I+u)}_2 > C_m \xi^{\alg_m},\quad \forall \xi \in (0,\delta],
\end{equation}
where $\pi_\Gamma$ stands for the orthogonal projection on $\Gamma$.
\end{enumerate}

\end{defn}

\begin{rmk}
Note that a uniformly strictly convex function is steep with steepness indices equal to $1$.
\end{rmk}
\begin{rmk}
	The steepness condition is generic in the space of jets of sufficiently regular functions (see \cite{Nekhoroshev_1973} for the general discussion and \cite{Schirinzi_Guzzo_2013}, \cite{Barbieri_2020} for sufficient conditions for steepness in the space of jets of order four and five respectively).
\end{rmk}

Our main theorem is the following.
%

{
	\begin{thm}[Stability estimates in the steep case]\label{MainTh3}		
Consider a near-integrable Hamiltonian system \eqref{eq:hampert} satisfying \eqref{hef} and assume $\ell\ge n+1$\ \footnote{Actually one could probably get $\ell\gtrsim n/2$ by making use of Paley-Littlewood theory.}. Suppose
that $h$ is steep in $B_\infty(0,R)$ with steepness indices $\alg:= (\alg_1,\ldots,\alg_{n-1})$ and set:
 $$
 \mathtt{a} := \frac{\ell-1}{2n\alg_1\times\cdots\times\alg_{n-2}}+\frac12\quad, \qquad \mathtt{b} :=  \frac{1}{2n\alg_1\times\cdots\times\alg_{n-1}}\ .
 $$
 Then, there exist positive constants ${\bf E}={\bf E}(n,\ell,\alg),\tC''_{\bf I}:=\tC''_{\bf I}(n,\ell,\alg)$, $\tC''_{\bf T}:=\tC''_{\bf T}(n,\ell,\alg)$ such that, for $\eps\le{\bf E}$,
 the radius and time of confinement relative to any initial condition in the set $ B_{\infty}(0, R/4)$ satisfy:
\begin{equation}\label{tempi-raggi}
{\bf R}(\eps) \leq \tC''_{{\bf I}} \eps^\mathtt{b}
\quad, \qquad 
{\bf T}(\eps) \leq \tC''_{{\bf T}}\frac{ 1}{ |\ln\eps|^{\ell-1}\,\eps^\mathtt{a}}\,.
\end{equation}
\end{thm}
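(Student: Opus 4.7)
The plan is to reduce the Hölder case to the classical analytic Nekhoroshev theorem via an \emph{analytic smoothing} of $f$ coupled with a Grönwall-type comparison argument. I would introduce two free parameters, an analyticity width $\rho$ and an ultraviolet cutoff $K$, and construct an analytic approximation $f_\rho$ of $f$ on the complex extension of $\pallara\times\T^n$ of width $\rho$ enjoying, simultaneously, (a) a refined bound on the weighted Fourier norm $\sum_{k\in\Z^n} |\widehat{f}_{\rho,k}|\,e^{|k|\rho}$ that is essentially of order $\eps$ up to a mild $|\ln\rho|^{\ell-1}$ factor, and (b) a vector-field approximation estimate $\|X_{f-f_\rho}\|_{C^0}\lesssim \eps\,\rho^{\ell-1}$. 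Bound (a) is the non-standard Fourier estimate announced in the abstract, and it is what will enforce the sharp time exponent $\mathtt{a}$.

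Second, I would apply the classical Nekhoroshev theorem for analytic steep Hamiltonians to $H_\rho := h + f_\rho$. Since $h$ is analytic and steep with indices $\alg$ and the perturbation $f_\rho$ has a controlled Fourier norm on the complex strip of width $\rho$, the usual partition of phase space into resonant blocks of multiplicities $0,1,\ldots,n-1$, together with the adapted normal forms on each block, yields a stability statement for $H_\rho$: any initial condition in $B_\infty(0,R/4)$ stays within a ball of radius $R_\rho\lesssim \eps^{\mathtt{b}}$ during a time $T_\rho$ which, in the range of $\rho=\rho(\eps)$ relevant below, is polynomially large in $1/\eps$ of the required order.

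Third comes the comparison step: if $(I(t),\theta(t))$ is an orbit of $H$ and $(I_\rho(t),\theta_\rho(t))$ is the orbit of $H_\rho$ starting from the same initial condition, a direct Grönwall estimate gives
\[
|I(t)-I_\rho(t)|\;\le\;\int_0^t |X_{f-f_\rho}|\,ds\;\lesssim\;t\,\eps\,\rho^{\ell-1},
\]
as long as both orbits remain in a common domain. Hence the confinement radius for $H$ is bounded by $R_\rho + t\,\eps\,\rho^{\ell-1}$. Calibrating $\rho=\rho(\eps)$ so that the Nekhoroshev contribution and the accumulated drift are of the same order $\eps^{\mathtt{b}}$ pins down $\rho$ and hence $\RR(\eps)$; requiring that $T_\rho$ exceed the drift-saturation time then yields $\TT(\eps)\lesssim 1/(|\ln\eps|^{\ell-1}\eps^{\mathtt{a}})$, in agreement with \eqref{tempi-raggi}.

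The main obstacle is step (a): producing the refined Fourier-norm bound with only a logarithmic loss in $\rho$. A naive mollification yields an unacceptable polynomial loss $\rho^{-(\ell-1)}$, which would strictly degrade $\mathtt{a}$, so one has to exploit the Hölder regularity in a sharper, Fourier-based way; this is presumably where the bulk of the novel work is. Ensuring that the non-integer case $\ell\notin\N$ is handled uniformly and that the interplay between $\rho$ and the cutoff $K$ is compatible with the Nekhoroshev patchwork scheme across all resonance multiplicities $m=0,\ldots,n-1$ is the other delicate point, and it is what explains the unusual splitting of $\mathtt{a}$ into the ``resonance'' contribution $(\ell-1)/(2n\alg_1\cdots\alg_{n-2})$ plus the ``drift'' contribution $1/2$.
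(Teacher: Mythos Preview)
Your comparison step has a genuine gap. The inequality $|I(t)-I_\rho(t)|\le\int_0^t |X_{f-f_\rho}|\,ds$ is not a valid Gr\"onwall bound: you are evaluating $X_H$ along $(I(s),\theta(s))$ but $X_{H_\rho}$ along the \emph{different} point $(I_\rho(s),\theta_\rho(s))$. A correct Gr\"onwall estimate for the full flow carries the factor $e^{Lt}$ with $L\sim M=\norma{D^2h}_{op}=O(1)$, which explodes over the polynomially long times $t\sim\eps^{-\mathtt{a}}$ you need; the angular variables of the two flows diverge at rate $O(1)$ and feed back into the action equations, so no decoupling is possible by a naive comparison. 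Relatedly, you cannot invoke the analytic steep Nekhoroshev theorem for $H_\rho$ as a black box when the analyticity width depends on $\eps$: every threshold and the stability time in that theorem depend on the width, so one is forced to redo the patchwork and normal-form construction with $\eps$-dependent parameters anyway.

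The paper does not compare two flows. It builds the normalizing transformation $\Psi$ for the smoothed analytic Hamiltonian $\tH_s=h+\tf_s$ in each resonant block (via P\"oschel's lemma, with $\eps$-dependent widths $r,s$ and cutoff $K$), applies this same $\Psi$ to the original $H$, and writes $H\circ\Psi=h+\tg+\big[\tf_s^*+(f-\tf_s)\circ\Psi\big]$; one then studies the \emph{single} flow of $H\circ\Psi$, in which the H\"older part is merely an extra remainder of $C^1$ size $\lesssim s^{\ell-1}\eps$, and the usual resonant-trap argument goes through verbatim. One further misattribution in your outline: the weighted Fourier norm of $\tf_s$ carries \emph{no} logarithmic loss --- Theorem~\ref{th_an_smooth} gives $\normf{\tf_s}{s}{s}\le C\eps$ cleanly. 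The factor $|\ln\eps|^{\ell-1}$ in~\eqref{tempi-raggi} comes instead from the choice $s\circeq\eps^a|\ln\eps|$, which makes $Ks\sim|\ln\eps|$ so that the analytic normal-form remainder $e^{-Ks/6}\eps$ is polynomial and is dominated by the smoothing remainder $s^{\ell-1}\eps\sim|\ln\eps|^{\ell-1}\eps^{1+a(\ell-1)}$.
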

}


\begin{rmk}
	\ \newline
	$\bullet $ {The presence of the logarithm in \eqref{tempi-raggi} comes from the fact that in our method we have some freedom to fix the analyticity width depending on $\eps$, in contrast with the classical analytic setting.  We send the reader to Remark \ref{pippone}, where this comment is contextualized,  the dependence of the analyticity width in $\eps$ is made precise and a qualitative justification is given. }
	\ \newline 
	$\bullet$ If we set $\alg_1,\ldots,\alg_{n-1}=1$ (i.e. the convex case) we obtain better estimates than in \cite{Bounemoura_2010}. 
	\ \newline
	$\bullet$ {Our proof relies on the geometric construction of the geography of resonances introduced in \cite{Guzzo_Chierchia_Benettin_2016}, which is appropriate only for Hamiltonians in $n \ge 3$ degrees of freedom.  Here too we shall restrict to this setting,  the $2$ d.o.f.  isoenergetic non-degenerate case being easily managed through KAM theory.  A specific construction should be implemented to treat the peculiarity of the isoenergetic degenerate $2$ d.o.f. case.  This study is in progress in a forthcoming work.  }
\end{rmk}

\paraga {\bf Prospects.} 
%
%
%
	
	The sharpness of the exponents in Theorem \ref{MainTh3} should be proved in the same way as in the case of convex system. The first attempt to tackle this problem led  to work in the Gevrey category instead of the analytic one and construct examples with unstable orbits, which experience a drift in action of the same order as the confinement radius within a time of the same order as the stability time, see \cite{Marco_Sauzin_2003}. It has then be realized that
the initial conjecture in quasi-convex analytic systems ($a\sim 1/2n$, see \cite{Chirikov_1979} and Lochak \cite{Lochak_1993}) was in fact incorrect: as proved in \cite{Bounemoura_Marco_2011} using a purely topological argument together with
the previous remark on the {\it local} exponents near simple resonances, one can choose $a=1/(2(n-1))$ as a global stability exponent for $\TT(\eps)$.  This result was improved soon after with $a\sim1/(2(n-2))$ (see \cite{Zhang_2017}). The construction of unstable system proving
the optimality of these latter exponents was achieved in \cite{Marco_Sauzin_2003}, \cite{Lochak_Marco_2005}, \cite{Zhang_2017}.
A remarkable fact is that the unstable mechanism introduced by Arnold in the 60's, with its
subsequent improvements, is exactly what is needed to produce the unstable examples in the quasi-convex case.

As for the steep case, a careful construction of the geography of resonances leads with strong evidence to the conjecture that the exponents 
$a=1/(2n\alg_1...\alg_{n-2})$ and $b=1/(2n\alg_1...\alg_{n-1})$ are sharp (see ref. \cite{Guzzo_Chierchia_Benettin_2016}).
The question of constructing explicit examples with unstable orbits proving this sharpness is still open nowadays and is maybe
the last challenging problem in the general long time stability theory, probably relying on new Arnold diffusion ideas.

\vskip1mm


The paper is organized as follows: in the next section  we give a short overview of the classical methods with particular attention on the geometry of resonant blocks,  on which the present work strongly relies.  Next we define the functional setting.  In Section \ref{analytic_smoothing} we introduce the analytic smoothing appropriately adapted to our problem.  
Finally Section \ref{steep case} is devoted to the study of the steep case.

\subsection*{Acknowledgements}
We wish to thank A. Bounemoura, L. Biasco, L. Chierchia, M. Salvatori, and L. Niederman for fruitful discussions and stimulating comments, which definitively helped to improve this work. 
J.E.M.
 acknowledges the support of the INdAM-GNAMPA 
grant ``{\it Spectral and dynamical properties of Hamiltonian systems}''.

\section{General setting and classical methods: a geometric framework}\label{pippo}


\paraga {\bf Resonances, resonant normal forms and the steepness condition.}\label{aiuto} 
Consider a Hamiltonian system of the form \eqref{eq:hampert} defined on $O\times \T^n$, where $O$ is an open subset of $\R^n$.
The main feature underlying Hamiltonian perturbation theory is that one can modify the form of the perturbation $f$ by composing $H$ with 
properly chosen local Hamiltonian diffeomorphisms, in order to remove a large number of ``nonessential harmonics''. 
The result of this process - a local normal form - strongly depends on the location of the domain of the normalizing diffeomorphism w.r.t the
resonances of the unperturbed part $h$, and enables one to discriminate between fast drift and extremely slow drift directions in the action space, 
according to this location.

\vskip1mm

Let us first make this idea more precise. Given an integer lattice $\Lambda\subset \R^n$ of dimension $m\in\{1,\ldots,n-1\}$ -- a {\it resonance lattice} --
one associates with $\Lambda$ the resonance vector subspace $\Lambda^\bot\subset\R^n$ in the frequency space $\R^n$,
together with the corresponding resonance subset in the action space previously introduced
$$
\cM_\La:=\om^{-1}(\Lambda^\bot)=\{I\in O\mid \om(I)\in \Lambda^{\bot}\},
$$ 
where $\om=\nabla h$ is the frequency map. The dimension $m$ of $\La$ is said to be the multiplicity of the resonance $\cM_\La$.  Of course, 
given a resonance module $\La'\supset\La$ with $\dim\La' >\dim\La$, the resonance $\cM_{\La'}$ is contained in $\cM_\La$,
so that a resonance subset contains in general infinitely many resonances of higher multiplicity.
The complement $\cM_0\subset O$ of the union
of all resonance subsets is the {\it non-resonant subset}. 
In general, a resonance subset $\cM_\La$ has no particular structure, however,
one can think of $\cM_\La$ as a submanifold of $\R^n$ of the same dimension as $\Lambda^\bot$ (with perharps singular loci).

\vskip1mm

As a rule, when $\eps$ is small enough,
for a small enough $\eps$-depending
neighborhood $W_\Lambda$ of the parts of the resonance subset $\cM_\La$ {\it located far enough from resonances of higher multiplicity}\footnote{In fact, only
a finite $\eps$-depending subset (related to the cutoff $K(\eps)$ introduced below) of these resonances has to be taken into account.}, 
one can iteratively  construct a symplectic diffeomorphism $\Psi_\Lambda$, whose image contains $W_\Lambda\times\T^n$,
such that the pull-back $H_\Lambda=H\circ\Psi_\Lambda$ takes the following form
\beq\label{eq:exnormform}
H_\Lambda=h+N_\La+R_\La.
\eeq 
Here $R_\La$ is a remainder whose $C^2$ norm is (very) small\footnote{The smallness depends on the regularity of the system.} with respect to $\eps$ and the
{\it resonant part} $N_\Lambda$ contains only harmonics belonging to $\La$, that is:
$$
N_\La(I,\th)=\sum_{k\in\La,\,\abs{k}_1\leq K(\eps)}a_k(I)\,e^{i k\cdot\th},
$$
where $K(\eps)$ is an ultraviolet cutoff which has to be properly chosen\footnote{This choice is indeed a main issue in the theory.}. 
Both terms $N_\La$ and $R_\La$ of course depend on $\eps$.
A subset $W_\La$ for which such a normal form is proved to exist will be called a { {\it normal form neighborhood associated with $\La$},
with multiplicity $\dim\La$.  One proves that   the space of actions can be covered by such neighborhoods, and in
Section \ref{geometria}, we will construct finer covers by subsets of those, named \co{resonant blocks} (and denoted by $D_\Lambda$ in the aforementioned section). }

\vskip1mm 

The iterative process to construct the normalizing diffeomorphism involves the control of small denominators which appear 
during the resolution of the so-called homological equation, and which depend on the location of the normalization domain with respect
to the resonances (see for instance \cite{Poschel_1993}). This can be seen as a drawback of the method which
could be greatly simplified by an idea due to Lochak (see below), however the general method presented here give precise dynamical 
informations which would not be reachable otherwise.

\vskip1mm

The Hamilton equations generated by \eqref{eq:exnormform} yield the following form for the evolution of the action variables:
\beq\label{eq:evolactions}
\begin{array}{lll}
I(t)-I(0)&=&\dst\int_0^t \d_\th N_\La\big(I(s),\th(s)\big)+\d_\th R_\La\big(I(s),\th(s)\big)\,ds\\[8pt]
&=&\dst \sum_{k\in\La,\,\abs{k}_1\leq K(\eps)}k\cdot \Big(\int_0^t i\,a_k(I(s))\,e^{i k\cdot\th(s)}\,ds\Big)+ \cR(t).
\end{array}
\eeq
The variation of  $I$ is therefore the sum of the main part 
\beq\label{eq:drift2}
\cD(t):=\sum_{k\in\La,\,\abs{k}_1\leq K(\eps)}k\cdot \cN^{(k)}(t),   \qquad  \cN^{(k)}(t)=\int_0^t i\,a_k(I(s))\,e^{i k\cdot\th(s)}\,ds,
\eeq
and the very small remainder term $\cR(t)$.

\vskip1mm

To simplify the presentation in the following, we will {\it forget about the angles} and consider only the action part of the solutions of our system (which
is legitimized by the fact that the angles play no role in the various estimates).

\vskip1mm

The whole theory relies firstly on the obvious fact that the main drift term $\cD(t)$ in~(\refeq{eq:drift2}) belongs to the vector space $\vect \La$ spanned by $\La$
(which is often called {\it ``plane'' of fast drift}), and secondly on the smallness of the remainder term~$\cR$.
A solution starting from some initial condition $I(0)\in W_\La$ will therefore remain very close to the fast drift
space
$$
I(0)+\vect \La
$$
during a very long time -- governed by the smallness of $\cR$ -- 
{\it as long as it is contained inside the neighborhood~$W_\La$}.
This makes it necessary to understand first the intersections of the fast drift planes $I+\vect\La$ and the neighborhoods $W_\La$
to which they are attached.

\vskip1mm

As an extreme example, let us consider the Hamiltonian
$$
h(I)=\demi (I_1^2-I_2^2)
$$
on $\A^2$, with (invertible) frequency map $\om(I_1,I_2)=(I_1,-I_2)$. 
We focus on the resonance module $\La=\Z(1,-1)$, so that $\La^\bot=\R(1,1)$ and $\vect \La=\cM_\La$. Hence,
given an initial action $I(0)\in \cM_\La$, the entire fast drift affine subspace $I(0)+\vect \La$ coincides with $\cM_\La$, so that
nothing prevents the fast drift to take place during the whole motion provided the perturbation is well-chosen: the resonance $\cM_\La$ is called a {\it superconductivity channel}. 
No long time stability result can be expected
in this case: indeed, when $f(I,\th)=\sin (\th_1-\th_2)$,
 the initial condition $I=0$, $\th=0$ yields the fast evolution $(I_1(t),I_2(t))=(-\eps t,\eps t)$ for the action variables \footnote{Here a proper choice of the initial angles is needed.}.

\vskip1mm

In constrast with the previous example, for the Hamiltonian 
$$
H(I,\th)=\demi {\norm{I}}^2_2+\eps f(\th)
$$
on $\A^n$, for any $\La\subset\Z^n_K$, the the resonant set $\cM_\La$ coincides with $\La^\bot$,
so that the affine planes of fast drift are always orthogonal to $\cM_\La$. In this case a fast drift - if it happens - makes the orbits move away
from the resonance in a very short time.

\vskip1mm

These extreme examples illustrate the role of the Nekhoroshev condition: steepness is an intermediate quantitative property,
which prevents from the existence of the superconductivity channels by ensuring a certain amount of transversality
between the fast drift planes and the corresponding resonances in action. Starting from an action $I=I(0)$ located at some
resonance $\cM_\La$, so that its associated frequency $\om(I)$ is orthogonal to $\Gamma:=\vect\La$,  the condition
\beq\label{Neko1}
\max_{0\le \eta \le \xi} \,  \min_{u\in\Gamma,\abs{u}_2=\eta} \abs{\pi_\Gamma\om(I+u)}_2 > C_m \xi^{\alg_m},\quad \forall \xi \in (0,\delta],
\eeq
(where $\pi_\Gamma$ stands for the orthogonal projection on $\Gamma$) imposes that a drift of length $\xi$ starting from $I$ and
occuring along the fast drift plane $I+\Gamma$ makes the projection $\pi_\Gamma(\om)$ change by an amount of   $C_m\xi^{\alg_m}$ 
during the way. 

\begin{figure}[h]
\centering
\vskip-2mm
\includegraphics[width=.8\textwidth]{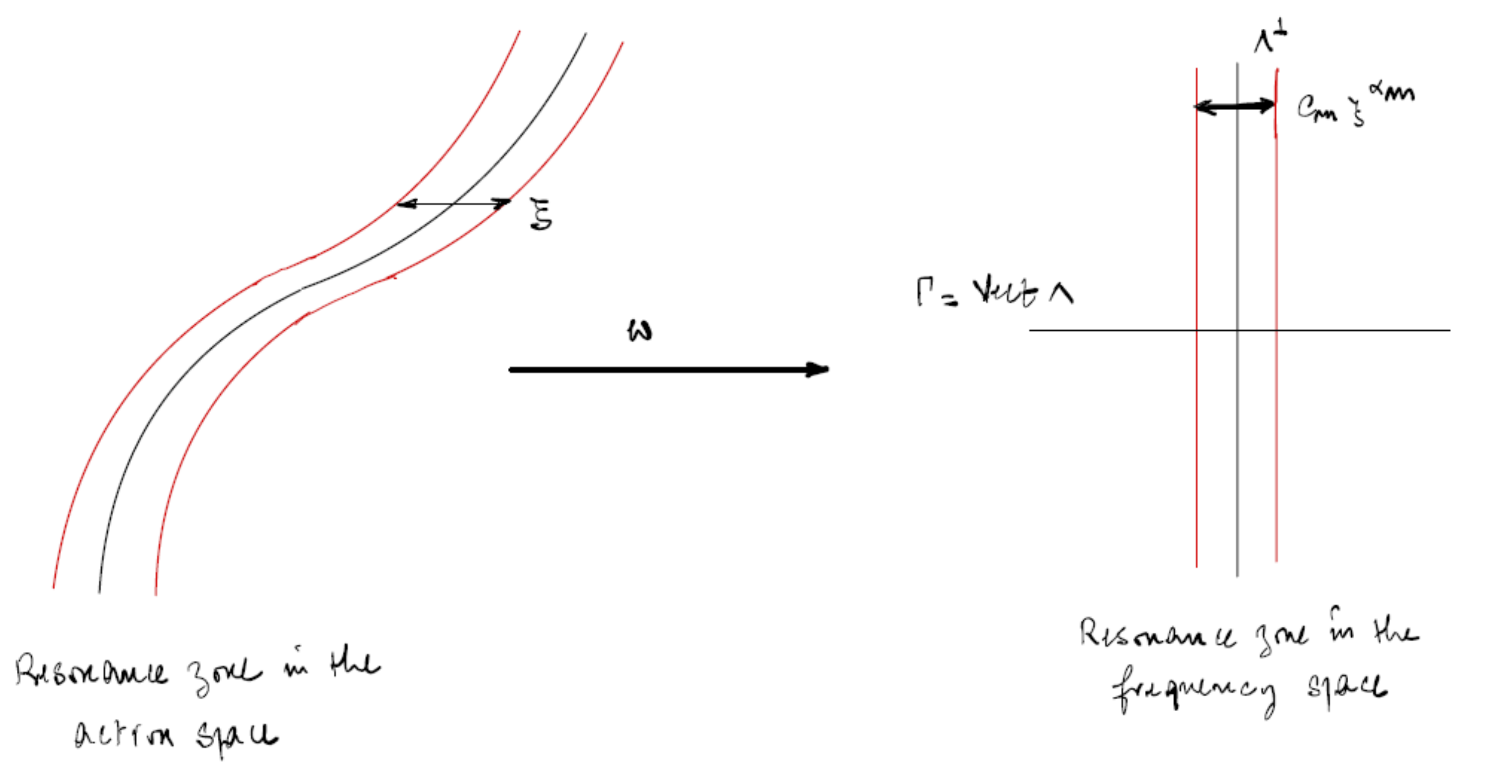}
\vskip-10mm
\caption{Geometric interpretation of the steep condition}\label{Figure 1}

\end{figure}

This admits an easy geometric interpretation (see Figure 1). Assume $\dim\La=m$ and consider the vector space $\Gamma$ 
spanned by $\La$,
together with its orthogonal space $\La^\bot$ - of dimension $n-m$. Then one can define a family of tubular neighborhoods of $\La^\bot$
of width $\delta>0$ by
\beq\label{eq:tubular}
\bT_\delta(\La^\bot)=\{\varpi\in\R^n\mid \pi_\Gamma(\varpi)<\delta\},\quad \delta>0\,.
\eeq
Each such neighborhood gives rise to a neighborhood of the resonance $\cM_\La$ in action, namely:
\beq\label{eq:neighb}
\bW_\delta(\cM_\La)=\om^{-1}\big(\bT_\delta(\La^\bot)\big).
\eeq
Therefore, condition \eqref{Neko1} just says that {\it any} orbit starting from $I$ and drifting to a distance $\xi$ from $I$
along the plane of fast drift $\Gamma$ must exit the neighborhood
$
\bW_{\delta}(\cM_\La)$
with
$ \delta=C_m\xi^{\alg_m}.
$

Note finally that given disjoint subsets $\bT$, $\bT'$ of tubular neighborhoods of the form \eqref{eq:tubular}, the associated neighborhoods
$\om^{-1}(\bT)$ and $\om^{-1}(\bT')$ are disjoint too, whatever the geometric assumptions on the frequency map~$\om$.

\paraga {\bf Nekhoroshev's hierarchy.} 
This section is inspired by Nekhoroshev's ideas as presented in the very nice paper \cite{Guzzo_Chierchia_Benettin_2016}. We also refer to \cite{Guzzo_2015} for further details and to \cite{Niederman_2007} for a  different approach. 
Nekhoroshev's strategy to prove long-time stability results for perturbations of steep Hamiltonians is
based on the previous description of resonant neighborhoods, and relies on  the following key observation. 

\vskip2mm

{\it Given $\eps$ small enough,  there exist $T(\eps)$, $R(\eps)$ and a covering of the action space $O$ by resonant ``blocks'' 
$(\cB_{m,p})_{0\leq p\leq p_m}$, for ${0\leq m\leq n-1}$,  and $m, p, p_m\in\N$, which satisfy
the following properties:
\begin{enumerate}
\item $T(\eps)\to+\infty$ and $R(\eps)\to 0$ when $\eps\to 0$;
\item each block $\cB_{m,p}$ is contained in a resonant neighborhood of multiplicity $m$ and admits an 
enlargement $\ha \cB_{m,p}\supset\cB_{m,p}$ contained in the same resonant neighborhood;
\item any solution starting from an initial condition in $\cB_{m,p}$ either stays inside $\ha \cB_{m,p}$ for  $0\leq t\leq T(\eps)$ or admits
a first exit time $t_1$ such that $I(t_1)$ belongs to a block $\cB_{m',p'}$ with $m'<m$;
\item for any initial condition $I(0)$ inside a block $\cB_{m,p}$ and for any interval $\cI$ such that $I(t)\in\ha \cB_{m,p}$ for all $t\in \cI$, then
$$
\norm{I(t)-I(0)}_2<R(\eps),\quad \forall t\in \cI.
$$
\end{enumerate}
}

\vskip3mm

We say that $m$ is the multiplicity of the block $\cB_{m,p}$. 
Taking the previous observation for granted, the stability of the action variable over a timescale $T(\eps)$ is easy to prove by finite induction.
Given an initial condition $I(0)$ located in some block $\cB_{m_0,p_0}$, either $I(t)\in \ha \cB_{m_0,p_0}$ for $0\leq t\leq T(\eps)$, or there is a $t_1$
such that $I(t)\in \ha \cB_{m_0,p_0}$ for $0\leq t <t_1$ and $I(t_1)$ belongs to a block $\cB_{m_1,p_1}$ with $m_1<m_0$. Consequently,
there is a finite sequence $(m_0,p_0),\ldots,(m_j,p_j)$ such that $m_0>m_1>\cdots>m_j$ (with maybe $m_j=0$) and a finite sequence of times
$t_0=0<t_1<\cdots<t_p=T(\eps)$ such that for $0\leq i <j$:
$$
I(t)\in\ha \cB_{(m_i,p_i)},\quad \forall t\in[t_i,t_{i+1}].
$$
In words, any orbits crosses a finite number of enlarged blocks during the interval $[0,T(\eps)]$ and get trapped inside the last one. 
To conclude, one just has to use property (4), which proves that the distance between $I(0)$ and $I(t)$ is at most $nR(\eps)$ for $t\in[0,T(\eps)]$.

\vskip1mm

One should be aware that the covering by the blocks is {\it not} a partition of $O$: two distinct blocks may have a nonempty intersection.
However, one can {\it choose} the blocks visited by the orbits according to a hierarchical order, in such a way that their multiplicity decreases as $t$ increases
\footnote{This raises the question of the existence of local  finite time Lyapunov functions on the phase space, a still unclear issue.}.
We say that a covering of $O$ by blocks satisfying the previous properties is a Nekhoroshev patchwork.

\paraga {\bf Construction of Nekhoroshev patchworks.}  
Let us now describe how the blocks are constructed so as to possess their covering and confinement properties\footnote{A source of inspiration
for nowadays governments.}.

\vskip1mm

Given $\eps>0$, we first fix an ultraviolet cutoff $K(\eps)$ and consider only the set $\bM_\eps$ of resonance modules  which are spanned by vectors
of length smaller than $K(\eps)$.
Given a resonant module $\La\in\bM_\eps$ of multiplicity $m$, we start with the resonant zone of ``width'' $\delta_\La$
$$
Z_\Lambda := W_{\delta_\La}(\cM_\La)=\om^{-1}\big\{\varpi\in\R^n\mid \norm{\pi_\Gamma(\varpi)}_2<\delta_\La\big\},
$$
where $\delta_\La$ has to be properly chosen as a function of $\eps$ and the various geometric invariants of the module (see section \ref{steep case}). 
We then define the ($\eps$-dependent) resonant zone $\cZ_m$ of multiplicity $m$ as
$$
Z_m=\bigcup_{\La\in\bM_\eps,\,\dim \La=m}Z_\Lambda.
$$
Given $\La\in\bM_\eps$, $\dim\La=m$, the block attached to $\La$ is obtained by removing from $Z_\La$ its intersection with the complete 
resonant zone of multiplicity $m+1$:
$$
\cB_\La= Z_\La\setminus Z_{m+1}.
$$

\vskip1mm

The blocks $\cB_{m,p}$ are the connected components of $Z_m$. With no great loss of generality, one can think of (the closure of) a block as a submanifold with 
boundary and corners -- even if it is not necessary. 

\vskip1mm

The following figure shows the construction of the blocks in the case $n=3$ (and in a transverse section). 
The resonance zone of multiplicity~2 if the disjoint union of the blue blocks, the resonance zone of multiplicity 1 is the union on the strips with red boundaries, 
while the $0$-multiplicity zone is the complement of the $1$-multiplicity zone.

\vskip1mm

In any case, the blocks satisfy two main properties.

\begin{itemize}
\item The closures of two different blocks can intersect only when their multiplicities are distinct.
\end{itemize}

This comes from a very careful choice of the widths of the various resonance zones (see \cite{Guzzo_Chierchia_Benettin_2016} and Section \ref{steep case}), which in fact 
ensures a more stringent (and crucial) property: the enlargement of a block contained in some $\cB_\La$ cannot intersect {\it any other block} contained in the zone $\cB_\La$,
neither {\it any other neighborhood $\cM_{\La'}$ with $\dim\La'=\dim\La$} (see below for precisions on the construction of the enlargement).

\begin{figure}[h]
\centering
\vskip-2mm
\includegraphics[width=.8\textwidth]{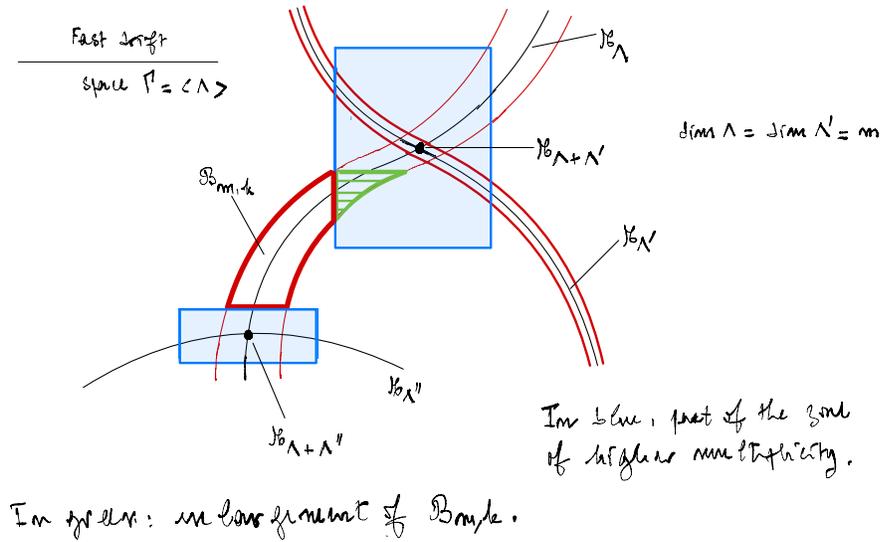}
\vskip-5mm
\caption{Construction of the resonant blocks}
\end{figure}

We state the second property in the spirit of Conley's isolating blocks theory.

\vskip1mm

\begin{itemize}
\item The frontier $\partial \cB_{m,p}$ of $\cB_{m,p}$ is the union of two subsets 
$$
\partial \cB_{m,p}=\partial^+\cB_{m,p}+\partial^-\cB_{m,p}
$$
where $\partial^+\cB_{m,p}$ (resp. $\partial^-\cB_{m,p}$) is contained in blocks $\cB_{m',p'}$ with $m'>m$ (resp. $m'<m$).
\end{itemize}

This raises new questions which could be the starting point of a better understanding of the relations between diffusion along invariant subsets 
and long-time stability theory. Indeed, given a block $\cB_{m,p}$, a description of the (generic) features of the Hamiltonian vector field $X_{H_\eps}$ at
the frontier $\partial \cB_{m,p}$ has never been done. In particular, nothing is known on the locus where $X_{H_\eps}$ ``enters the block''
and the locus where $X_{H_\eps}$ ``exits the block''. These two subsets are crucial for the understanding of the homology of the invariant sets contained
into the blocks, following Conley's theory, and could provide one with a new tool for constructing diffusing orbits in the steep setting.

\begin{figure}[h]
\centering
\vskip-2mm
\includegraphics[width=.7\textwidth]{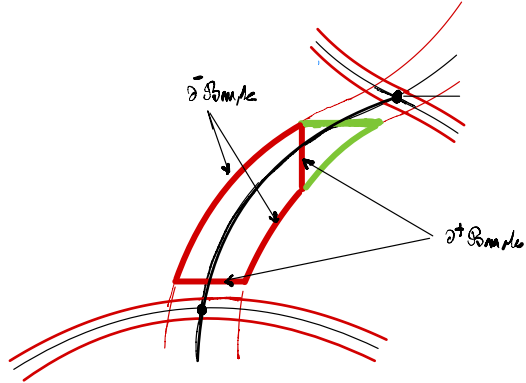}
\vskip-10mm
\caption{Interpretation of the resonant blocks in the light of Conley’s theory}
\end{figure}
\begin{figure}[h]
	\centering
	\vskip-2mm
	\includegraphics[width=.8\textwidth]{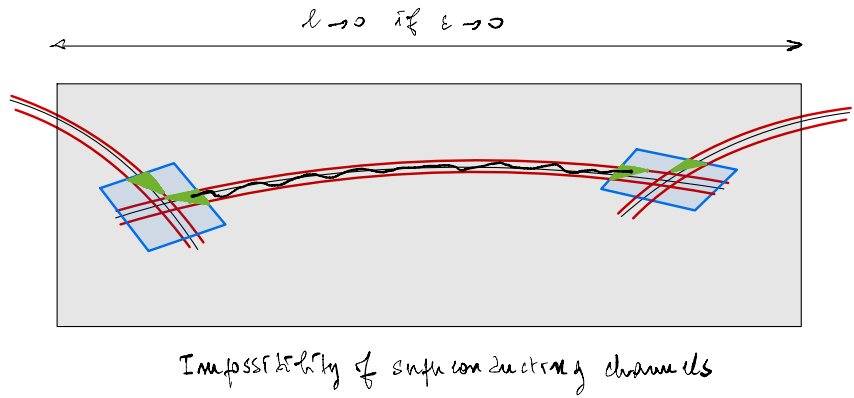}
	\vskip-5mm
	\caption{The Steepness property prevents the existence of superconductivity channels by ensuring a contact of
finite order between the resonant manifold and the plane of fast drift. Here in the figure, $\ell$ is the size of the resonant zone (see Section \ref{geometria})}
\end{figure}
Going back to the construction of Nekhoroshev's patchwork, we have to make precise the process conducting to the enlargement of a block and its
stability property. Here we will again make a crucial use of the fact that an orbit starting from an initial condition $I:=I(0)$ located in $\cB_{m,p}$ will remain extremely close to the
fast drift space $I+\vect \La$ for $0\leq t\leq T(\eps)$, as long as it stays inside the resonant neighborhood $\cM_\La$ and far enough to the higher 
multiplicity resonance zones. Hence, to enlarge the block $\cB_{m,k}$, we just have to add to it the collection of all the parts of the disks centered at points $I\in \cB_{m,p}$ which are contained in the intersection of the fast drift spaces $I+\vect\La$ with the resonant neighborhood $\cM_\La$ (the resulting added subset is
the green part in the previous two figures). 
We have in fact to add a very small neighborhood of these union of disks, in order to prevent the solutions
to exit the extended block under the influence of the remainder part $\cR$ of the dynamics during the time $T(\eps)$, but this would not change our description
significantly. Finally, one has to make sure that the extension will not intersect any other block of the same neighborhood $\cB_\La$ or any other resonance
neighborhood, which can be done by a careful tuning of the width of the zone (see Section \ref{steep case}).

This concludes our description of Nekhoroshev's method. 

\vskip1cm
\section{Functional setting}\label{func_setting}
{For $n\geq 1$, we denote the standard $n$-dimensional torus by $\T^n = \R^n/2\pi\Z^n $ and the standard $2n$-dimensional annulus by
	$ \A^n = \R^n\times \T^n$. 

	\paraga {\bf H\" older differentiable functions.} Given an integer $q\geq0$ and an open subset $D$ of $\R^n$,
	we denote by $C^{q}(D)$ the set of $q$-times continuously differentiable maps 
	$f:D\to \R$ ($C^0(D)$ being the set of continuous functions on $D$). 
	We identify $C^{q}(\T^n)$ with the subset of $C^{q}(\R^n)$ formed by the functions that are $2\pi\Z^n$-periodic
	and $C^{q}(D\times \T^n)$ with the subset of $C^{q}(D\times \R^n)$ formed by the functions 
	which are $2\pi\Z^n$-periodic with respect to their last $n$ variables.
	
	We use the conventional notation for partial derivatives: given $f\in C^{q}(D)$ and $\alp\in \N^n$, we set for $x\in D$:
	$$
	\partial^\alp f(x)=\frac{\partial^{\norms{\alp} }f}{\partial x_1^{\alp_1}\ldots \dr x_n^{\alp_n}}(x),
	$$
	with $\norms{\alp}=\alp_1+\cdots+\alp_n$.

	We denote by  $C_b^q(D)$ the set of $f\in C^q(D)$ such that
	\begin{equation}\label{norma texas}
	\norma{f}_{C^q(D)} := \sup_{\abs{\alpha} \le q}\, \sup_{x\in D} \abs{\partial^\alpha f(x)}<+\infty,
	\end{equation}
	so that $\big(C_b^q(D),\norma{\cdot  }_{C^q(D)}\big)$ is a Banach space with multiplicative norm\footnote{That is, satisfying an inequality
		of the form $\norm{fg}\leq C\norm{f}\norm{g}$ for a suitable constant $C$.}. It is understood that, for a function defined on a compex domain $D$,  the $\norma{\cdot}_{C^0(D)}$ is the usual sup-norm.  
	

	If $\ell>0$ is a non-integer real number, we write $q:=\floor{\ell}$ for its integer part and $\mu = \ell - q \in (0,1)$
	for its fractional part. 
	Given a non-negative integer $q$ and $\mu\in(0,1)$, 
	we denote by $C_b^{q,\mu}(D)$  the space formed by those functions
	$f\in C^q(D)$ such that 
	\begin{align}\label{norma holder}
	\begin{split}
	|f|_{C^{q,\mu}(D)}:=\norma{f}_{C^q(D)}+\sup_{\alpha\in\mathbb{N}^n:|\alpha|= q}\ \ \sup_{\substack{x,y \in D:\\ 0<|x-y|<1}}\frac{|\partial^\alpha f(x)-\partial^\alpha f(y)|}{|x-y|^{\mu}}<+\infty.
	\end{split}
	\end{align}
	{It is well-known that  $\big(C_b^{q,\mu}(D),\norms{\cdot }_{C^{q,\mu}(D)}\big)$ is also a Banach space with multiplicative norm. Functions belonging to these spaces are called Hölder-differentiable functions.}
	
	Given a non-integer real number $\ell>0$, together with its integer part  $q:=\floor{\ell}$ and its fractional part $\mu = \ell - q \in (0,1)$,
	we also write $C^\ell_b(D)$ instead of $C_b^{q,\mu}(D)$ and $|\cdot |_{C^{\ell}(D)}$ instead of $|\cdot |_{C^{q,\mu}(D)}$. 
	Clearly $C^\ell_b(D)\subset C^{\ell'}_b(D)$ when $\ell\geq \ell'$ and if $f\in C^\ell_b(D)$
	\begin{equation}\label{norma ineg}
	|f|_{C^{\ell'}(D)}\leq |f|_{C^{\ell}(D)}.
	\end{equation}
	

	

	\paraga {\bf Domains and their complex extensions.}

	
	Let us define the complex $n$-dimensional torus $\T_\C^n$ and the complex $2n$-dimensional annulus $\A_\C^n$ as
	\begin{equation}
	\label{infiniti}
	\T^n_\C = \C^n/2\pi\Z^n \qquad \text{and} \qquad \A^n_\C = \C^n\times \T^n_\C.
	\end{equation}
	We use angle coordinates $\theta$ on $\T^n_\C$ (with the usual abuse $\theta\in\C^n$ when there is no ambiguity)
	and action-angle coordinates $(I,\theta)$ on $\A_\C^n$. We see $\T^n_\C$  
	as a real $n$-dimensional vector bundle over $\T^n$.
	Consequently, we write
	\begin{equation}\label{norme complex}
	\abs{\teta} := \max_j \pa{\abs{\Im \teta_j}}\ , \qquad \abs{I}:= \max_j \abs{I_j}\ , \qquad \abs{(I, \teta)} = \max \pa{\abs{I},\abs{\teta}}\, .
	\end{equation}
	For  integer vectors $k\in\Z^n$, we use the ``dual" $\ell^1$-norm, which we write $|k|$ only when there is no risk of confusion.
	
	\vskip2mm
	
	We need to introduce specific domains in $\A^n_\C$. First, given $r>0$, for 
	a domain $D\subset\mathbb{R}^n$, we set
	\begin{equation}\label{dominos}
	D_r:=\big\{z\in\mathbb{C}^n:\exists z^*\in D: |z-z^*|_2< r \big\}\ .
	\end{equation}
	As for the torus, given $s>0$, we introduce the global complex neighborhood
	\begin{equation}\label{toros}
	\T^n_s := \big\{\theta\in\T^n_\C\, : \, |\teta| < s\big\}.
	\end{equation}
	We will essentially deal with complex domains of the form
	\begin{equation}\label{cilindros}
	\ciccio{r,s}\ := D_r\times\T^n_s\subset \A_\C^n.
	\end{equation}
	We finally write $D^\R_r$ and $\ciccio{r,s}^{\R}$ for the projections of $D_r$ and $\ciccio{r,s}$ on $\R^n$ and $\A^n$ respectively.\\
	
	

	\paraga {\bf Analytic functions and norms.} If $g$ is a bounded holomorphic function defined on $\T^n_s, D_r$ or $\ciccio{r,s}$ we  
	denote the corresponding classical sup-norms by 
	\begin{equation}\label{norme analitiche}
	\abs{g}_s = \sup_{\teta\in\T^n_s} \abs{g(\teta)},\quad \abs{g}_{r} = \sup_{I\in D_r} \abs{g(I)},\quad \abs{g}_{r,s} = \sup_{(I,\teta)\in\ciccio{r,s}} \abs{g(I,\teta)}.
	\end{equation}
	Fix a bounded holomorphic function $g:\cD_{r,s+2\sigma}\to \C$, where $\sigma >0$,  and let $g(I,\teta) = \sum_{k\in\Z^n} \hat{g}_k(I)e^{\im\,k\cdot\teta}$ be its Fourier expansion, where $k\cdot\teta = k_1\teta_1 + \cdots + k_n\teta_n$.  We then introduce the {\it weighted Fourier norm}
	\begin{equation}\label{fourier_norm}
	\normf{g}{r}{s}:= \sup_{I \in D_r}\sum_{k\in\Z^n}\abs{\hat{g}_k(I)}\,e^{\abs{k}s},
	\end{equation}
	which is finite and satisfies
	\begin{equation}\label{norme Fouriertiche}
	\abs{g}_{r,s} \leq \normf{g}{r}{s}\leq {\rm coth^n\,}\sigma\,\abs{g}_{r,s+\sigma}.
	\end{equation}

	We denote by $\cA_{r,s}$ the space of holomorphic functions on $\ciccio{r,s}$ with finite Fourier norm.
	Endowed with this norm, $\cA_{r,s}$ is a Banach algebra. 
	
	Finally, the norm of a vector valued function will be the maximum of the norms of its components.

	
	\section{Analytic smoothing}\label{analytic_smoothing}
	We state in this section the key ingredient of the present work. We first recall the analytic smoothing method as 
	developed by Jackson-Moser-Zehnder for H\"older functions of $\R^n$: 
	given a H\"older function $f\in C^\ell(\R^n)$ and a positive number $s\le 1$, this yields an analytic 
	function on the complex neighborhood 
	$\R^n_s$ whose restriction to $\R^n$ is close to $f$ in the $C^k$ topology, for $1\leq k\leq \ell$. 
	
	We then adapt their method to our specific setting of functions defined on $\A^n$ (see Section \ref{smooth anello})  and,  in addition,  we derive the new estimate \eqref{fourier_est_lissage} for the weighted Fourier norm of the smoothed function.


	\subsection{Analytic smoothing in $\R^n$} 
	We recall here the result by
	Jackson, Moser and Zehnder, following the presentation by \cite{Chierchia_2003} and \cite{Salamon_KAM}.

	\begin{prop}[Jackson-Moser-Zehnder] 
		\label{liscio chierchia} Fix an integer $n\geq1$, a real number
		$\ell> 0$ and let $f\in C^\ell_b(\R^n)$. Then there is a constant $\jack = \jack(\ell, n)$ such that for every $0<s\le 1$ 
		there exists a function $\tf_s$, analytic  on $\R^n_s$, which satisfies 
		\begin{equation}
		\left|\partial^\alpha \tf_s(x) - \sum_{\substack{\beta\in\mathbb{N}^n:|\beta|\leq \floor{\ell}-|\alpha|}}\partial^{\alpha+\beta} f(\Re x)\frac{(\Im x)^\beta}{\beta!}\right|\le \jack\, s^{\ell-|\alpha|} \abs{f}_{C^\ell(\R^n)},\quad \forall x \in \R^n_s,
		\end{equation} 
		for all multi-integer $\alpha\in\N^{n}$ such that $\abs{\alpha}\le \floor{\ell}$.  More precisely, given any even $C^\infty$ function $\Phi$ with compact support in 
		$\R^n$
		and setting \begin{equation}\label{Kappa}
		K(\xi) := \frac{1}{(2\pi)^{n}}\int_{\R^{n}} \Phi(x) e^{\im x\cdot \xi}dx ,\quad \xi\in \R^n_s,
		\end{equation}
		\newline
		the function
		\begin{equation}\label{liscio}
		\tf_s( x):=\int_{\R^{n}}{K}\left(\frac{x}{s}-\xi\right)f(s\xi)\ d\xi \ ,
		\end{equation}
		satisfies the previous requirements (where the constant $\jack(\ell, n)$ depends on the choice of $\Phi$).
	\end{prop}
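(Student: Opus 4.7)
The strategy is to exploit moment conditions on the kernel $K$ so that it behaves like a Dirac mass up to order $\floor{\ell}$, while the Paley-Wiener theorem provides the analytic extension and decay needed for the relevant contour manipulations. I would first fix $\Phi\in C^\infty_c(\R^n)$ even, supported in some ball $B_\rho(0)$, with $\Phi(0)=1$ and $\partial^\beta\Phi(0)=0$ for every $1\le|\beta|\le\floor{\ell}$; such a $\Phi$ is easy to construct by multiplying a standard bump with an explicit cut-off polynomial. With the convention in \eqref{Kappa} this translates into the moment conditions $\int_{\R^n}K(\xi)\xi^\beta\,d\xi=\delta_{\beta,0}$ for $|\beta|\le\floor{\ell}$. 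Since $\Phi$ is compactly supported, Paley-Wiener gives an entire extension of $K$ to $\C^n$ satisfying $|K(z)|\le C_N(1+|\Re z|)^{-N}e^{\rho|\Im z|}$ for every $N$, which is what I will need to justify every contour shift below.

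A first reduction eliminates the role of $\alpha$. Integrating by parts $|\alpha|$ times in $\xi$ in \eqref{liscio}, which is legitimate on $\R^n$ because $f$ is $\floor{\ell}$-times classically differentiable and $K$ is Schwartz, gives
\[
\partial^\alpha\tf_s(x)=\int_{\R^n}K\!\left(\tfrac{x}{s}-\xi\right)(\partial^\alpha f)(s\xi)\,d\xi,\qquad x\in\R^n,
\]
and by analytic continuation this identity extends to $x\in\R^n_s$. As $\partial^\alpha f\in C^{\ell-|\alpha|}_b(\R^n)$ with $|\partial^\alpha f|_{C^{\ell-|\alpha|}}\le|f|_{C^\ell}$ and $\floor{\ell-|\alpha|}=\floor{\ell}-|\alpha|$, it suffices to treat the case $\alpha=0$ applied to $\partial^\alpha f$ (with the same kernel $K$, whose moment conditions up to $\floor{\ell-|\alpha|}$ remain valid).

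For $\alpha=0$ and $x=u+iv\in\R^n_s$ (so $u,v\in\R^n$ with $|v|\le s$), I would change variables $y=s\xi$ and then $y'=u-y$ to rewrite
\[
\tf_s(x)=s^{-n}\int_{\R^n}K\!\left(\tfrac{iv+y'}{s}\right)f(u-y')\,dy',
\]
and Taylor-expand $f(u-y')$ around $u$ up to order $\floor{\ell}$, the Hölder remainder being of size $O(|y'|^\ell|f|_{C^\ell})$. Each polynomial term reads $\partial^\gamma f(u)\,\tfrac{(-1)^{|\gamma|}}{\gamma!}\,s^{-n}\!\int K\!\left(\tfrac{iv+y'}{s}\right)(y')^\gamma\,dy'$; after the rescaling $\tau=y'/s$ and the contour shift $\R^n+iv/s\to\R^n$ (justified by the Paley-Wiener decay of $K(w)w^\gamma$), the integral becomes $\int K(\tau)(\tau-iv/s)^\gamma\,d\tau$, which by the moment conditions collapses to $(-iv/s)^\gamma$. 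Summing the signs reproduces exactly $\sum_{|\gamma|\le\floor{\ell}}\partial^\gamma f(u)(iv)^\gamma/\gamma!$, the truncated Taylor expansion appearing in the statement.

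It remains to control the Hölder remainder, which after the same rescaling $\tau=y'/s$ is bounded by $|f|_{C^\ell}\,s^\ell\!\int_{\R^n}|K(iv/s+\tau)||\tau|^\ell\,d\tau$. Since $|v/s|\le 1$, Paley-Wiener yields $|K(iv/s+\tau)|\le C_N e^{\rho}(1+|\tau|)^{-N}$, so the integral is a constant depending only on $\ell$, $n$ and $\Phi$, producing the desired bound $\jack\,s^\ell|f|_{C^\ell}$. I expect the most delicate technical point to be the contour shift: one must verify that the contributions of the vertical sides vanish, for which the uniformity of the Schwartz decay of $K$ on every horizontal strip of width $\le 1$ (uniform in $|v/s|\le 1$) is precisely what is needed.
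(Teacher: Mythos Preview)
The paper does not prove this proposition; it is stated as a known result with references to Chierchia's KAM lectures and Salamon's paper, so there is no ``paper's own proof'' to compare against. Your argument is correct and is essentially the classical proof found in those references: impose moment conditions on $K$ via the choice $\Phi(0)=1$, $\partial^\beta\Phi(0)=0$ for $1\le|\beta|\le\floor{\ell}$; reduce to $\alpha=0$ by the convolution identity $\partial^\alpha\tf_s=(\partial^\alpha f)_s$; Taylor-expand $f$ to order $\floor{\ell}$ with H\"older remainder; and use a contour shift together with the moment conditions to recover the truncated Taylor polynomial.

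Two small remarks. First, the paper's phrasing ``given any even $C^\infty$ function $\Phi$'' is a bit loose: as your argument makes clear, one genuinely needs $\Phi(0)=1$ and the vanishing of the derivatives at the origin up to order $\floor{\ell}$ (evenness only kills the odd moments), otherwise the polynomial terms produce errors of order $s^2$ rather than $s^\ell$. You are right to impose these conditions explicitly. Second, your computation actually yields $\sum_\gamma \partial^\gamma f(\Re x)\,(i\Im x)^\gamma/\gamma!$ rather than the expression with $(\Im x)^\gamma$ printed in the statement; the version with the factor $i^{|\gamma|}$ is the correct one (it is what one gets from the formal Taylor expansion of an analytic extension), and this is how the estimate appears in Salamon and Chierchia. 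This is a typo in the displayed formula, not an issue with your proof.
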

	
	Observe that $\tf_s$ takes real values when its argument is in $\R^n$.

	
	\subsection{Analytic smoothing in $\A^n$.}\label{smooth anello}
	In the following,  the H\"older regularity $\ell$ is assumed to satisfy $\floor{\ell} \ge n+1$ as in the hypotheses of Theorem \ref{MainTh3}. \\ We now specialize the previous result to our setting and give a more detailed description of the method in the
	case of functions of $\A^n$. In that case, the analytic smoothing is a truncation of the Fourier series of the initial Hölder function with suitably modified Fourier coefficients (the so-called Jackson polynomials).  Our main concern here is to derive an estimate on the weighted Fourier
	norm of an $s$-smoothed $C^\ell$ function over a complex strip of width $s$.
	
	To make the whole presentation more explicit {\it and take the anisotropy of the weighted Fourier norm into account}, we first consider functions defined on $\R^n$ and $\T^n$ separately. This then yields a statement for functions of $\A^n$.
	
	\vskip2mm
	
	$\bullet$ {\it The non-periodic case}. Fix an even function $\Phi:\R^n\to[0,1]$, of class $C^\infty$, with support in the ball $\ov B_2(0,1)$ and 
	let $K:\C^n\to\C$ be its Fourier-Laplace transform:
	\beq\label{eq:kernel}
	K(y)=\frac{1}{(2\pi)^n}\int_{\R^n}\Phi(\eta)e^{-i \eta\cdot y}d\eta.
	\eeq
	Since $\Phi$ is compactly supported, then $K$ is an entire function . Moreover its restriction to $\R^n$ is in the Schwartz class
	$\jS(\R^n)$ since $\Phi$ is, and this is also the case for the translates $y\mapsto K(y-z)$ for $y\in\R^n$ and fixed $z\in\C^n$.
	
	\vskip2mm
	
	Let $f:\R^n\to\R$ be a $C^\ell$ function with $\floor{\ell}\geq n+1$, with compact support contained in the ball $\ov B_\infty(0,R_0)$ for some $R_0>0$.
	Given $s\in\,]0,1]$, set for $x\in\R^n$:
	\beq\label{eq:smoothing}
	\f_s(x)=\frac{1}{s^n}\int_{\R^n}K\Big(\frac{x-y}{s}\Big)f(y)dy=\int_{\R^n}K\Big(\frac{x}{s}-y\Big)f(sy)dy=\int_{\R^n}K(y)f(x-sy)dy.
	\eeq
	By Fourier reciprocity:
	$$
	\f_s(x)=\int_{\R^n}\Phi(\eta)\ha{f(x-sy)}(\eta)d\eta,
	$$
	with:
	$$
	\ha{f(x-sy)}(\eta)=\frac{1}{(2\pi)^n}\int_{\R^n}f(x-sy)e^{-iy\cdot\eta}dy=\frac{1}{(2\pi)^ns^n}\int_{\R^n}f(u)e^{-i(x-u)\cdot\eta/s}du
	=\frac{e^{-i x\cdot\eta/s}}{s^n}\ha f\Big(\frac{-\eta}{s}\Big).
	$$
	Therefore, since $\Phi$ is even:
	\beq\label{fsazioni}
	\f_s(x)=\frac{1}{s^n}\int_{\R^n}\Phi(\eta)\ha f\Big(\frac{-\eta}{s}\Big)e^{-ix\cdot\eta/s}d\eta=
	\int_{\R^n}\Phi(s\eta)\ha f({-\eta})e^{-i x \cdot\eta}d\eta
	=\int_{\R^n}\Phi(s\eta)\ha f({\eta})e^{i x\cdot\eta}d\eta.
	\eeq
	Hence $\f_s$ is the inverse Fourier-Laplace transform of the ``truncation'' 
	$$
	\eta\mapsto\Phi(s\eta)\ha f(\eta).
	$$
	The first term of (\ref{eq:smoothing}) shows that $\f_s$ extends to $\C^n$ and is an entire function. 
	To get our final estimate we  go back to the second term in \eqref{eq:smoothing}, which yields
	\beq\label{boh}
	|\f_s(z)|\le \norma{f}_{C^0(\R^n)} \int_{\R^n}\left|K\left(\frac{z}{s}-y\right)\right|dy,\qquad z\in\C^n.
	\eeq
	By the Schwartz estimate of Lemma \ref{gigi},  there exists a constant $C_n$ such that 
	$$
	\left|K\left(\frac{z}{s}-y\right)\right|\le C_n \frac{e^{\Im (z/s-y)}}{(1+|z/s-y|_2)^{n+1}},
	$$
	so that, for $y\in\R^n,z\in\C^n$ and $|\Im z|_2\le s$:
	$$
	\left|K\left(\frac{z}{s}-y\right)\right|\le C_n \frac{e}{(1+|\Re(z/s-y)|_2)^{n+1}}.
	$$
	Hence:
	\beq\label{Ora pro nobis}
	|\f_s(z)|\le \norma{f}_{C^0(\R^n)} C_n e\int_{\R^n} \frac{dy}{(1+|y|_2)^{n+1}}.
	\eeq
	since $z/s$ is fixed and can be eliminated by a simple translation. We finally get the following estimate:
	\beq\label{eq:estim1}
	|\f_s|_s=\sup_{z\in\C^n: \abs{\Im z}_2\leq s}|\f_s(z)|\leq C_1(n)\norma{f}_{C^0(\R^n)},
	\eeq
	with
	$$
	C_1(n):=C_n e\int_{\R^n} \frac{dy}{(1+|y|_2)^{n+1}}<\infty.
	$$

	\vskip3mm
	
	$\bullet$ {\it The periodic case.} Fix now an even function $\Psi:\R^n\to[0,1]$, of class $C^\infty$, with support in the ball 
	$\ov B_1(0,1)$ and define the associate
	kernel $K$ as in (\ref{eq:kernel}).
	
	\vskip1mm
	
	Fix  a ${2\pi}\Z^n$-periodic function $f\in C^\ell(\R^n)$ with $\ell\geq n+1$. Then the Fourier expansion
	$$
	f(\th)=\sum_{k\in\Z^n}\ha f_k e^{ik\cdot \th},\qquad \ha f_k=\frac{1}{(2\pi)^n}\int_{\T^n}f(\varphi)e^{-ik\cdot \varphi}d\varphi,
	$$
	converges normally since, by Lemma \ref{F1} in Appendix \ref{Fourier},  for $k\in\Z^n\setm\{0\}$, there exists a universal constant $\jeanpierre(n,\ell)$ satisfying
	\beq\label{FourierMaj}
	\abs{\ha f_k}\leq \jeanpierre(n,\ell)\frac{||f||_{C^{\floor{\ell}}}}{\norm{k}_\infty^{\floor{\ell}}}
	\eeq
	and $\floor{\ell}\ge n+1$ by hypothesis.
	For $s\in\,]0,1]$, the function 
	$$
	\f_s(\th)=\frac{1}{s^n}\int_{\R^n}K\Big(\frac{\th-\varphi}{s}\Big)f(\varphi)d\varphi
	$$ 
	is well-defined and, by the Fubini interversion theorem:
	$$
	\f_s(\th)=\sum_{k\in\Z^n}\ha  f_k\int_{\R^n}K(\varphi)e^{ik\cdot(\th-s\varphi)}d\varphi=\sum_{k\in\Z^n}\ha  f_k e^{ik\cdot\th}\int_{\R^n}K(\varphi)e^{-i sk\cdot \varphi}d\varphi.
	$$
	Hence, since $K$ is the inverse Fourier transform of $\Psi$,
	by the Fourier inversion theorem:
	\beq\label{Jackson polynomial}
	\f_s(\th)=\sum_{k\in\Z^n}\ha  f_k \Psi(sk)\,e^{ik\cdot\th},\quad \th\in\R^n.
	\eeq
	As in the non-periodic case, this makes apparent that $\f_s$ is a continuous truncation of the Fourier expansion of $f$ with 
	a $\Psi$-dependent modification of its Fourier coefficients (the so-called Jackson polynomial):
	\beq
	\ha{(\f_s)}_k=\Psi(sk)\ha  f_k\ . 
	\eeq
	Consequently, the Fourier norm
	$$
	\norma{\f_s}_s=\sum_{k\in\Z^n}\abs{\ha {(\f_s)}_k}e^{s\norm{k}_1}
	$$
	depends only on the harmonics such that $|k|_1\le1/s$ and satisfies
	%
	%
	$$
	\norma{\f_s}_s\leq \sum_{\norm{k}_1\leq 1/s}\abs{\ha {(\f_s)}_k}\,e^{s\norm{k}_1} \leq e\sum_{\norm{k}_1\leq 1/s}\abs{\ha {(\f_s)}_k}\leq e\,\sum_{k\in\Z^n}\abs{\ha  f_k}.
	$$
	Hence, by (\ref{FourierMaj}):
	\beq\label{eq:majper}
	\norma{\f_s}_s\leq C_2(\ell) \norm{f}_{C^{\floor{\ell}}}
	\eeq
	with
	\beq\label{eq:constper}
	C_2(\ell):=e\Bigg(1+\jeanpierre(n,\ell)\sum_{k\in\Z^n\setm\{0\}}\frac{1}{\norm{k}_\infty^{[\ell]}} \Bigg)
	\eeq
	
	\vskip3mm
	\def\F{{\bf F}}
	$\bullet$ {\it Functions on $\A^n$.} We finally gather together the previous two cases. Let $\Phi\otimes\Psi:\R^n\times\R^n\to[0,1]$ be defined by 
	$$
	\Phi\otimes\Psi(x,\th)=\Phi(x)\Psi(\th),
	$$
	and define the kernel
	$$
	K(y,\varphi)=\int_{\R^{2n}}\Phi\otimes\Psi(x,\th)\,e^{-i (x,\th)\cdot (y,\varphi)}\,dxd\th=K_\Phi(y)K_\Psi(\varphi)=K_\Phi\otimes K_\Psi(y,\varphi)
	$$
	where $K_\Phi$ and $K_\Psi$ are defined as above.
	\vskip2mm
	Fix a function $f:\R^n\times\R^n\to\C$, $2\pi\Z^n$-periodic with respect to its last $n$ variables,  with support in $\ov B_2(0,R_0)\times\R^n$ for some $R_0>0$, belonging to $C^\ell(\R^{2n})$ with $\floor{\ell}\geq n+1$. For $s\in\,]0,1]$ and $(x,\teta)\in\R^n\times\R^n$, set
	$$
	\begin{array}{lll}
	\f_s(x,\th)&\!\!\!=\dsp\int_{\R^{2n}}K(y,\varphi)f(x-sy,\th-s\varphi)dyd\varphi\\[9pt]
	&\!\!\!=\dsp\int_{\R^{2n}}K(y,\varphi)\sum_{k\in\Z^n} \ha f_k(x-sy)e^{ik\cdot(\th-s\varphi)}dyd\varphi\\[9pt]
	\end{array}
	$$
	with
	\beq\label{eq:Fourier}
	\ha f_k(u)=\frac{1}{(2\pi)^n}\int_{\T^n}f(u,v) e^{-i k\cdot v} dv.
	\eeq
	Note that $f_k$ is $C^{\ell}$, with support in $\ov B_2(0,R_0)$, so that the previous study on the non-periodic case applies to $f_k$.
	\vskip2mm
	By Fubini interversion
	\beq\label{Jackson_polynomial}
	\begin{array}{lll}
		\f_s(x,\th)&\!\!\!=\dsp\sum_{k\in\Z^n}\int_{\R^{2n}}K(y,\varphi)\ha f_k(x-sy)e^{ik\cdot(\th-s\varphi)}dyd\varphi\\[9pt]
		&\!\!\!=\dsp\sum_{k\in\Z^n}\Big(\int_{\R^{n}}K_\Phi(y)\ha f_k(x-sy)dy\Big)\Big(\int_{\R^{n}}K_\Psi(\varphi)e^{ik\cdot(\th-s\varphi)}d\varphi\Big) \\[12pt]
		&\!\!\!=\dst\sum_{k\in\Z^n}({\bf \ha f}_k)_s(x)\Psi(sk)e^{ik\cdot\th}\\[10pt]
	\end{array}
	\eeq
	where $({\bf \ha f}_k)_s$ stands for the analytic smoothing of the Fourier coefficient $\ha f_k$. This proves that the Fourier coefficient $(\ha{{\bf f}}_s)_k(x)$
	relative to the periodic variable $\th$ reads
	\beq\label{eq:Fouriercoeff}
	(\ha \f_s)_k(x)=\Psi(sk)({\bf \ha f}_k)_s(x),\quad k\in\Z^n.
	\eeq
	Expressions \eqref{Jackson_polynomial} and \eqref{eq:Fouriercoeff} make clear that the whole smoothing procedure of a function depending both on action and angle variables consists in constructing a Jackson trigonometric polynomial by smoothing the Fourier coefficients and by suitably truncating the Fourier series.
	
	\def\fk{{\bf f_k}}
	\vskip2mm
	
	Using the definition of $\Psi$, $(\ha \f_s)_k=0$ when $\abs{k}_1>1/s$ and,  by \eqref{eq:Fouriercoeff} and \eqref{eq:estim1}:
	\beq\label{adeste fideles}
	|(\ha \f_s)_k(z)|\le |(\ha {\bf f}_k)_s(z)|\le C_1(n) \norma{\ha f_k}_{C^0(\R^n)}\le C_1(n)\jeanpierre(n,\ell)\frac{|f|_{C^{\floor{\ell}}(\R^n)}}{|k|_\infty^{\floor{\ell}}},\quad k\neq 0, \ \abs{k}_1\leq 1/s,
	\eeq
	and
	\beq\label{Alleluja}
	|(\ha \f_s)_0(z)|\le C_1(n)\norma{\ha f_0}_{C^0(\R^n)}\le C_1(n)\norma{f}_{C^0(\R^n)}.
	\eeq
	As for the weighted Fourier norm of ${\bf f}_s$, we finally get:
	$$
	\begin{array}{lll}
	||\f_s||_{s,s}&=&\sup_{\norm{\Im z}_2\leq s}\sum_{k\in\Z^n} \abs{ (\ha\f_s)_k(z)} \,e^{s\norm{k}_1}\\[8pt]
	&\leq&C_1(n)\norma{f}_{C^0(\R^n)}+\dsp\sum_{\substack{k\in\Z^n\backslash\{0\}:\\|k|_1\le 1/s}} eC_1(n)\jeanpierre(n,\ell)\frac{|f|_{C^{\floor{\ell}}(\R^n)}}{|k|_\infty^{\floor{\ell}}}\le C_L(n,\ell) |f|_{C^\ell(\R^n)}\,,
	\end{array}
	$$
	where 
	\beq
	C_L(n,\ell):=C_1(n)\left(1+e\jeanpierre(n,\ell)\sum_{k\in\Z^n}\frac{1}{|k|_\infty^{\floor{\ell}}}\right)<+\infty.
	\eeq


	\subsection{The main result with an application to normal forms.}\label{strategia parchetto}


	\subsubsection{Main result} Gathering together the elements of the previous section, we get the following result.

	\begin{thm}[Analytic smoothing]\label{th_an_smooth} Fix an integer $n\geq 1$, $R>0$ and $s\in\,]0,1]$.
		Let $f$ be a $C^\ell$ function on $B_\infty(0,2R)\times \T^n$.
		Then
		there exist two constants $\jess(R,\ell,n),\santi(R,\ell,n)$ and an analytic function $\tf_s$
		on the set $\A_s^n$ satisfying
		
		\begin{equation}\label{est_smoothing}
		\norma{f-\smooth{f}}_{C^p(\pallaro)} \le
		\  \jess(R,\ell,n)\ s^{\ell-p} \normcl{f}{\ell}{B_\infty(0,2R)\times\T^n}\quad \text{ for any  integer $0\le p\le \floor{\ell}$}
		\end{equation}
		and
		\begin{equation}\label{fourier_est_lissage}
		\norma{\tf_s}_{s,s}\le \santi(R,\ell,n) {\normcl{f}{\ell}{B_\infty(0,2R)\times\T^n}}.
		\end{equation}
		Moreover, $\smooth{f}$ is a trigonometric polynomial in the angular variables. 
	\end{thm}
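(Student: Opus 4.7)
The plan is to reduce everything to the compactly supported setting already handled in Section~\ref{smooth anello} and to Proposition~\ref{liscio chierchia}, the only new ingredient being a cutoff extension to obtain a global $C^\ell$ function whose $C^\ell$ norm is controlled by $|f|_{C^\ell(B_\infty(0,2R)\times\T^n)}$.

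\textbf{Step 1: Cutoff extension.} First, choose a $C^\infty$ cutoff $\chi\colon\R^n\to[0,1]$ with $\chi\equiv 1$ on $\pallara=B_\infty(0,R)$ and $\supp\chi\subset B_\infty(0,2R)$, and set $\tilde f(x,\theta):=\chi(x)\,f(x,\theta)$, extended by zero to all of $\R^n\times\T^n$. By the Leibniz rule for H\"older classes, there is a constant $C_0(R,\ell,n)$ (depending on finitely many derivatives of $\chi$) such that
\begin{equation*}
|\tilde f|_{C^\ell(\R^n\times\T^n)}\le C_0(R,\ell,n)\,|f|_{C^\ell(B_\infty(0,2R)\times\T^n)}.
\end{equation*}
Moreover $\tilde f$ is $2\pi\Z^n$-periodic in its last $n$ variables and has support in $\overline{B}_\infty(0,2R)\times\R^n$, so the construction of Section~\ref{smooth anello} applies verbatim.

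\textbf{Step 2: Definition of $\tf_s$.} Define $\tf_s$ as the smoothing of $\tilde f$ via the kernel $K=K_\Phi\otimes K_\Psi$ from Section~\ref{smooth anello}. The explicit Fourier representation \eqref{Jackson_polynomial} together with $\supp\Psi\subset\overline B_1(0,1)$ shows that the only non-vanishing harmonics satisfy $|k|_1\le 1/s$, hence $\tf_s$ is automatically a trigonometric polynomial in $\theta$; the entire character in the action variable follows from \eqref{eq:smoothing}.

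\textbf{Step 3: $C^p$ estimate.} Apply Proposition~\ref{liscio chierchia} to $\tilde f$. For a real argument $(x,\theta)\in\pallaro$ all terms of the Taylor polynomial with $|\beta|\ge 1$ vanish (since $\Im(x,\theta)=0$), and on $\pallaro$ we have $\tilde f=f$. Hence for every multi-index $\alpha$ with $|\alpha|\le\lfloor\ell\rfloor$,
\begin{equation*}
\bigl|\partial^\alpha \tf_s(x,\theta)-\partial^\alpha f(x,\theta)\bigr|\le \jack\,s^{\ell-|\alpha|}\,|\tilde f|_{C^\ell(\R^n\times\T^n)}.
\end{equation*}
Taking the supremum over $|\alpha|\le p$ and $(x,\theta)\in\pallaro$ and combining with Step~1 yields \eqref{est_smoothing} with $\jess(R,\ell,n):=\jack\,C_0(R,\ell,n)$.

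\textbf{Step 4: Weighted Fourier estimate.} The computation carried out just before the statement of the theorem, based on the bounds \eqref{adeste fideles} and \eqref{Alleluja} together with the decay estimate \eqref{FourierMaj}, yields
\begin{equation*}
\|\tf_s\|_{s,s}\le C_L(n,\ell)\,|\tilde f|_{C^\ell(\R^n\times\T^n)}\le C_L(n,\ell)\,C_0(R,\ell,n)\,|f|_{C^\ell(B_\infty(0,2R)\times\T^n)},
\end{equation*}
so one can take $\santi(R,\ell,n):=C_L(n,\ell)\,C_0(R,\ell,n)$, establishing \eqref{fourier_est_lissage}.

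The only genuinely non-routine step is Step~1, where one must check that the cutoff does not destroy the H\"older estimates; this is classical but is where the dependence on $R$ in the constants $\jess$ and $\santi$ enters. All remaining steps are direct consequences of results already proved in the excerpt.
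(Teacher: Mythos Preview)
Your proposal is correct and follows essentially the same approach as the paper's own proof: a $C^\infty$ cutoff $\chi$ supported in $B_\infty(0,2R)$ and identically $1$ on $B_\infty(0,R)$ reduces to the compactly supported case of Section~\ref{smooth anello}, Proposition~\ref{liscio chierchia} gives \eqref{est_smoothing}, and the Fourier-norm computation preceding the theorem gives \eqref{fourier_est_lissage}, with the constants $\jess$ and $\santi$ arising exactly as you wrote from the product of the Jackson constant (resp.\ $C_L$) with the cutoff constant.
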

	\begin{proof}
		Fix a function $\chi\in C^\infty(\R^n)$, with values in $[0,1]$, equal to $1$ on the ball $\pallara$ and with support in $\pallaradue$.
		Then the product $\ov f:=\chi f$ is $C^\ell$ on $\A^n$, has compact support in $\pallaradue \times \T^n$ and coincides with $f$ on $ \pallara \times \T^n$.
		Moreover
		$$
		\normcl{\ov f}{\ell}{B_\infty(0,2R)\times\T^n}\leq {\tC_K}\normcl{f}{\ell}{B_\infty(0,2R)\times\T^n}
		$$
		where ${\tC_K}=C\normcl{\chi}{\ell}{\pallaro}$ and $C$ is a universal constant. By the Jackson-Moser-Zehnder  theorem applied to 
		$\ov f$,  there is an analytic function $\smooth{\bar{f}}$ on $\A^n_s$ satisfying 
		\begin{equation}\label{chierchia}
		\left|\partial^\alpha \smooth{\bar{f}}(I,\teta)-\sum_{\substack{\beta\in\mathbb{N}^{2n}:\\|\beta|\leq \floor{\ell}-|\alpha|}}\partial^{\alpha+\beta} \bar{f}(\Re  (I,\teta))\frac{(\Im (I,\teta))^\beta}{\beta!}\right|\le \jack  s^{{\ell}-|\alpha|}\normcl{\bar{f}}{\ell}{\A^n},
		\end{equation}
		so that for any $p\le \floor{\ell}$:
		\begin{equation}\label{chierchia_2}
		\norma{\bar{f}-\smooth{\bar{f}}}_{C^p(\A^n)} \le 
		\  \jack s^{\ell-p} \normcl{\bar{f}}{\ell}{\A^n}.
		\end{equation}
		As a consequence, taking the form of $\chi$ into account, one gets
		\begin{equation}
		\norma{{f}-\smooth{f}\ }_{C^p(\pallaro)} \le {\tC_K} \jack  s^{\ell - p} \normcl{f}{\ell}{B_\infty(0,2R)\times\T^n}.
		\end{equation}
		Setting $\jess:={\tC_K}  \jack$ and, since the analyticity width $\rho$ of the integrable part $h$ is greater than $s$, the bound \eqref{est_smoothing} follows.
		The proof of (\ref{fourier_est_lissage}) is an immediate consequence of the previous paragraphs if one sets $\santi:=C_L\times C_K$.
	\end{proof}
	
	\def\bH{{\bf H}}
	\def\bh{{\bf h}}

	
	\subsubsection{An easy way to derive normal forms for H\"older functions from analytic ones.}\label{ssec:easy}
Let us now explain our strategy for a general H\"older Hamiltonian, {we will then restrict ourselves to the case where $h$ is analytic}.	Let
	\begin{equation}
	H(I,\teta):=h(I)+f(I,\teta)
	\end{equation}
	be $C^\ell$ on $B_\infty(0,2R) \times \T^n$. Given $s\in\,]0,1]$, let $\bH_s$ be the $s$-smoothed analytic function given by Theorem \ref{th_an_smooth}
	applied to the function $H$.
	By classical constructions (alluded to in the introduction and which will be recalled in the following), there exist (close to identity) symplectic analytic local diffeomorphisms 
	$\Psi$ defined on domains $D\subset \A^n$ which bring $\bH_s=\bh_s+\f_s$ to the normal form $\bH_s\circ\Psi: D\to \R$:
	\beq
	\bH_s\circ\Psi=\bh_s+\bf{g}+ \f_s^*
	\eeq
	where $\bh_s$ is nothing else than the smoothed initial integrable Hamiltonian, $\tg$ is a resonant part which controls the fast drift in certain directions
	and $\f_s^*$ is a very small remainder -- all these functions being analytic on $D$. The keypoint in our subsequent constructions is the following very 
	simple equality
	\beq\label{eq:smoothedNF}
	H\circ\Psi=\bH_s\circ\Psi+(H-\bH_s)\circ\Psi=\bh_s+\bf{g}+ \big[\f_s^*+(H-\bH_s)\circ \Psi\big].
	\eeq
	This is a normal form for $H$, obtained by composition of $H$
	with an {\it analytic} diffeomorphism, in which the first three terms are analytic on $D$ and only the last one is $C^\ell$.
	So $H\circ\Psi$ has the same structure and dynamical interpretation as $\bH_s\circ \Psi$, {\it provided that the $C^\ell$ size of the additional
		remainder $(H-\bH_s)\circ \Psi$ is of the same order as the size of the initial remainder $\f_s^*$}. This issue strongly depends on the analytic smoothing
	method in use, we will show in the sequel that the Jackson-Moser-Zehnder method is relevant for our purposes.
	Our study will be even easier since we assume from the beginning that the integrable part $h$ is analytic.
	
	\vskip2mm
	
	It turns out that the same smoothing method - and the same simple way to get a normal form from an analytical one -  are also relevant in many other functional classes, the main ones being the Gevrey classes already used in \cite{Marco_Sauzin_2003}, but also other ultradifferentiable ones. This will be developed in a further work.
	
	\section{Estimates of stability}\label{steep case}
	The aim of this section is to prove Theorem \ref{MainTh3}. The proof consists of several steps. Following the discussion in section \ref{aiuto} of the introduction, we first build an appropriate resonant covering of the phase space for the integrable Hamiltonian $h$. Secondly, we study the local dynamics by applying P\"{o}schel's resonant normal form (see Appendix \ref{loch_app}) in each resonant block and we set the dependencies of the ultraviolet cut-off $K$ and analyticity widths $r,s$ on the perturbative parameter $\eps$. Finally, we exploit the properties of the resonant covering and we obtain a global result of stability by exploiting the so called "capture in resonance" argument.

	\subsection{Construction of the resonant patchwork}\label{geometria}
	In the sequel, we follow ref. \cite{Guzzo_Chierchia_Benettin_2016}, in which the choices of the parameters and the dependencies of the small denominators on the ultraviolet cut-off $K$ are justified heuristically. 	
	 For the sake of clarity,  in order to have coherent notations  we denote by $D_\Lambda$ rather than $\cB_\Lambda$ the resonant blocks introduced in Section \ref{pippo}, moreover  when possible we will not keep track of constants \footnote{i.e. of quantities depending only on the fixed parameters of the problem, namely $n,h,\ell$ and on the indices of steepness $\alg_1,...,\alg_{n-1}$.} but rather indicate their presence in bounds and equalities by using the following symbols respectively: $\circeq ,\lessdot$ and $\gtrdot$.
	
	We start by setting some parameters, depending on the steepness indices $\alg_1,..,\alg_{n-1}$ of $h$, that will be useful throughout this section. \begin{equation}\label{parametrini}
	p_j:=\begin{cases}
	\Pi_{i=j}^{n-2}\alg_i  &, \quad \text{if }j\in\{1,...,n-2\}\\
	1 &, \quad \text{if }j\in\{n-1,n\}
	\end{cases}\ ;
	\quad q_j:=np_j-j\ , \ \ j\in\{1,...,n\}\ ; \quad c_j:=q_j-q_{j+1} \ , \ \ j\in\{1,...,n-1\}
	\end{equation}
	and set
	\begin{equation}\label{ab}
	a:=\frac{1}{2n\alg_1...\alg_{n-2}}=\frac{1}{2np_1} \quad , \qquad b:=\frac{1}{2n\alg_1...\alg_{n-1}}=\frac{a}{\alg_{n-1}}\quad , \qquad \tR(\eps):\circeq\, \eps^b\ .
	\end{equation}
	With this setting, we fix an action $I_0\in B_\infty(0,R/4)$ and we consider its neighborhood $B_2(I_0,\tR(\eps))$. 
	
	Since $h$ is steep in $B_\infty(0,R)$, the norm of the frequency $\omega:=\partial_I h(I)$ at any point of this set admits a uniform lower positive bound, that is $\inf_{I\in B_\infty(0,R)}||\omega(I)||\gtrdot 1$. Hence,  when studying the geography of resonances for $h$, for sufficiently small $\eps$ and without any loss of generality we can just consider maximal lattices $\Lambda\subset \Z^n_K$ of dimension $j\in\{0,...,n-1\}$, with $K\ge 1$ the ultraviolet cut-off. For a lattice $\Lambda$ of dimension $j\in\{0,...,n-1\}$ we define its associated {\it resonant zone} as
	\begin{equation}\label{resonant_zone}
	Z_\Lambda:=\{I\in B_2(I_0,\tR(\eps)):\ \forall k\in \Lambda\ \text{one has }\ \ |k\cdot \omega(I)|<\delta_\Lambda\}\ ,\ \ \delta_\Lambda:\circeq \frac{1}{|\Lambda|K^{q_j}}\ .
	\end{equation}
and its associated {\it resonant block} $D_\Lambda$ as
	\begin{equation}\label{blocchetti}
	D_\Lambda:=Z_\Lambda\backslash \bigcup_{\Lambda':\,\dim \Lambda'=j+1}Z_{\Lambda'}\ .
	\end{equation}
	Note that $D_\Lambda$ corresponds to that part of the resonant zone $Z_\Lambda$ which does not contain any other resonances other than the one associated to $\Lambda$.
	In particular, this implies that for the completely non-resonant block associated to $\Lambda=\{0\}$ and for any block $\Lambda$ corresponding to a maximal resonance of dimension $j=n-1$ one has, respectively 
	\begin{equation}\label{maledizione}
	D_0:=B(I_0,\tR(\eps))\backslash \bigcup_{\Lambda':\ \dim \Lambda'=1}Z_{\Lambda'}\quad  \text{ and } \quad D_\Lambda=Z_\Lambda\ .
	\end{equation}
	For any $j\in\{0,...,n-1\}$ we set
	\begin{equation}
	D_j:=\bigcup_{\Lambda:\ \dim \Lambda=j}D_{\Lambda}\quad,\qquad Z_j:= \bigcup_{\Lambda:\ \dim \Lambda=j}Z_{\Lambda} \ .
	\end{equation}
	It is easy to see from \eqref{blocchetti} that
	\begin{equation}
	D_j=Z_j\backslash Z_{j+1}
	\end{equation}
	so that from the definition of $D_0$ in \eqref{maledizione} one has the decompositions
	\begin{equation}\label{covering}
	B_2(I_0,\tR(\eps))=\bigcup_{i=0}^{n-1} 
	D_i\quad ,\qquad B_2(I_0,\tR(\eps))=\left(\bigcup_{i=0}^{j-1} 
	D_i\right)  \cup Z_j \qquad \forall j=1,...,n-1\ .
	\end{equation}
	As we have explained in the introduction (see section \ref{aiuto}), a large drift over a short time of any action variable $I\in D_\Lambda$ is only possible along the plane of fast drift $I+\langle \Lambda\rangle$ spanned by the vectors belonging to $\Lambda$. Moreover, the fast motion of the orbit starting at $I$ along $I+\langle\Lambda\rangle$ can take the actions out of the block $D_\Lambda$. So, we are interested in understanding what happens when the actions leave $D_\Lambda$ but keep staying in $Z_\Lambda$.  Hence, we are naturally taken to consider the intersection of a neighborhood of $I+\langle\Lambda\rangle$ with $Z_\Lambda$. In this spirit, we fix 
	\begin{equation}\label{rhoerre}
	\rho(\eps):=\displaystyle\frac{\tR(\eps)}{2n}
	\end{equation}
	and, for any $0<\eta\le \rho(\eps)$ and for any action $I\in D_\Lambda$ with $\Lambda\neq\{0\}$, we define the {\it disc} associated to $I$ as
	\begin{equation}\label{disc}
	\mathbf{D}^{\rho}_{\Lambda,\eta}(I):=\Bigg(\ \bigg(\bigcup_{I'\in I+\langle\Lambda\rangle}B_2(I',\eta)\bigg)\cap Z_\Lambda \cap B\big(I_0,\tR(\eps)-\rho(\eps)\big) \  \Bigg)_I
	\end{equation}
	where the subscript $I$ denotes the connected component of the set containing the action $I$. Since we are going to study the fate of all orbits starting at a fixed block $D_\Lambda$, with $\Lambda\neq \{0\}$, that exit such block in a short time along the plane of fast drift, we are also led to define the {\it extended resonant block} 
	\begin{equation}\label{blocco_esteso}
	D_{\Lambda,r_\Lambda}^\rho:=\Bigg(\bigcup_{I\in D_\Lambda\cap B(I_0,\tR(\eps)-\rho(\eps))}\mathbf{D}^{\rho}_{\Lambda,r_\Lambda}(I)\Bigg)\ \subset\  Z_\Lambda\cap B\big(I_0,\tR(\eps)-\rho(\eps)\big)\quad,\qquad r_\Lambda:= \frac{\delta_\Lambda}{M}\ ,
	\end{equation}
	where $M$ was defined in $\eqref{M}$. 
	In the same way, the {\it extended non-resonant block} is defined as 
	\begin{equation}\label{blocco_esteso_nr}
	D_0^\rho:=D_0\cap B(I_0,\tR(\eps)-\rho(\eps))\ .
	\end{equation}

	\subsection{The resonant blocks}
 As we have explained there, Nekhoroshev proved in \cite{Nekhoroshev_1973} that, if $h$ is steep, when any action $I\in D_\Lambda$, with $\Lambda\neq\{0\}$, moves along the plane of fast drift, it must exit the resonant zone $Z_\Lambda$ after having travelled for a short distance. Indeed, if $h$ is steep with steepness indices $\alg_1,...,\alg_{n-1}$ one can prove that the diameter of the intersection of a neighborhood of the fast drift plane with the resonant zone is small in the sense given by the following 
	\begin{lemma}\label{GCB1}
		For any $\Lambda\neq 0$, $\dim\Lambda=j\in\{1,...,n-1\}$, for any $I\in D_\Lambda \cap B(I_0,\tR(\eps)-\rho(\eps))$ and for any $I'\in \mathbf{D}^\rho_{\Lambda,r_\Lambda}(I)$ one has
		\begin{equation}
		\abs{I-I'}_2\le r_j\quad ,\qquad \text{ where } \quad r_j:\circeq  \displaystyle\frac{1}{K^{q_j/\alpha_j}}\ .
		\end{equation}
	\end{lemma}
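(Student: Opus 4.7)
The plan is to use the steepness condition at $I$ to bound the component of $I'-I$ along the fast drift plane $\Gamma:=\langle\Lambda\rangle$, exploiting the fact that the whole path from $I$ to $I'$ stays inside $Z_\Lambda$, where $|\pi_\Gamma\omega|_2$ is forced to be small by the very definition of $\delta_\Lambda$.

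First I would pick a continuous path $\gamma:[0,1]\to\mathbf{D}^\rho_{\Lambda,r_\Lambda}(I)$ joining $I$ to $I'$, which exists because $\mathbf{D}^\rho_{\Lambda,r_\Lambda}(I)$ is by definition a connected component. I would then decompose $\gamma(t)-I=p(t)+v(t)$ with $p(t)\in\Gamma$ and $v(t)\in\Gamma^\perp$; the slab structure of the disc in \eqref{disc} forces $|v(t)|_2\le r_\Lambda$ for every $t$. Setting $\xi:=\sup_{t\in[0,1]}|p(t)|_2$, the triangle inequality gives $|I-I'|_2\le \xi+r_\Lambda$, so it suffices to prove $\xi\lesssim r_j$, since $r_\Lambda\circeq K^{-q_j}\le K^{-q_j/\alg_j}\circeq r_j$ (using $\alg_j\ge 1$).

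Next I would establish the uniform upper bound $|\pi_\Gamma\omega(I+p(t))|_2\lesssim K^{-q_j}$. On the one hand, $\gamma(t)\in Z_\Lambda$ yields $|k\cdot\omega(\gamma(t))|<\delta_\Lambda$ for all $k\in\Lambda$, and a standard small-divisor estimate on a minimal $\Z$-basis of $\Lambda$ -- with the lattice-geometric factor absorbed into $|\Lambda|$ -- gives $|\pi_\Gamma\omega(\gamma(t))|_2\lesssim |\Lambda|\delta_\Lambda\circeq K^{-q_j}$. On the other hand, the Hessian bound \eqref{M} and the choice $r_\Lambda=\delta_\Lambda/M$ yield $|\omega(\gamma(t))-\omega(I+p(t))|_2\le M|v(t)|_2\le \delta_\Lambda$, so the bound transfers from $\gamma(t)$ to $I+p(t)$. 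I would then apply the steepness condition at $I$ with parameter $\xi$ (admissible for $\eps$ small, so that $\xi\le\delta$): there is $\eta^*\in(0,\xi]$ such that $|\pi_\Gamma\omega(I+w)|_2>C_j\xi^{\alg_j}$ for every $w\in\Gamma$ with $|w|_2=\eta^*$. Because $p(\cdot)$ is continuous with $p(0)=0$ and $\sup_t|p(t)|_2=\xi\ge\eta^*$, the intermediate value theorem produces $t^*$ with $|p(t^*)|_2=\eta^*$. Comparing the two bounds at $I+p(t^*)$ gives $C_j\xi^{\alg_j}\lesssim K^{-q_j}$, hence $\xi\lesssim K^{-q_j/\alg_j}\circeq r_j$, as required.

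The main delicacy lies in calibrating the constants in the second step: the choices $\delta_\Lambda\circeq 1/(|\Lambda| K^{q_j})$ and $r_\Lambda=\delta_\Lambda/M$ are precisely tuned so that the $Z_\Lambda$-small-divisor bound and the Lipschitz perturbation coming from the slab thickness both contribute the same order $K^{-q_j}$, cleanly absorbing the geometric factor $|\Lambda|$ associated with the lattice. Once this matching is in hand, the rest is a purely geometric argument combining the intermediate value theorem with the min--max definition of steepness, exactly in the spirit of the picture sketched in Figure~4 of the excerpt.
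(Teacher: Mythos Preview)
The paper does not supply its own proof here; it simply refers to Lemma~2.1 of \cite{Guzzo_Chierchia_Benettin_2016}. Your sketch follows exactly the architecture of that argument: control the transverse component of the path by $r_\Lambda$, combine the definition of $Z_\Lambda$ with the Hessian bound \eqref{M} and the choice $r_\Lambda=\delta_\Lambda/M$ to get uniform smallness of $|\pi_\Gamma\omega(I+p(t))|_2$, and then close via steepness plus the intermediate value theorem applied to $t\mapsto|p(t)|_2$.

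There is, however, one genuine gap. You invoke the steepness inequality at the pair $(I,\Gamma)$ with $\Gamma=\langle\Lambda\rangle$, but Definition~\ref{def steep} applies only to subspaces $\Gamma$ \emph{orthogonal} to $\omega(I)$. In your situation $I\in Z_\Lambda$ merely gives $|\pi_\Gamma\omega(I)|_2\lessdot K^{-q_j}$, which is small but not zero, so the min--max inequality cannot be quoted as written. In \cite{Guzzo_Chierchia_Benettin_2016} this is handled explicitly: one tilts $\Gamma$ to the nearby subspace $\tilde\Gamma\subset\omega(I)^\perp$ (the angle being $O(K^{-q_j}/\inf|\omega|_2)$), applies steepness legitimately to $(I,\tilde\Gamma)$, and transfers the conclusion back to $\Gamma$; the correction terms are harmless at the level of $\lessdot$. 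The fix is routine once noticed, but it is not automatic and your write-up should include it. A secondary point: the passage from ``$|k\cdot\omega(\gamma(t))|<\delta_\Lambda$ for $k\in\Lambda$'' to ``$|\pi_\Gamma\omega(\gamma(t))|_2\lessdot|\Lambda|\,\delta_\Lambda$'' is a genuine lattice-geometric estimate (invert the Gram matrix of a $K$-bounded basis of $\Lambda$), and the exact powers of $|\Lambda|$ and $K$ that appear are precisely what the calibration $\delta_\Lambda\circeq 1/(|\Lambda|K^{q_j})$ is designed to absorb; it would strengthen your sketch to make that computation explicit rather than calling it ``standard''.
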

For a proof of this result we refer to Lemma 2.1 of ref. \cite{Guzzo_Chierchia_Benettin_2016}.

	We notice that a smaller value of $\eps$, i.e. a higher value of $K$ since the ultraviolet cut-off is always a decreasing function of $\eps$, leads to a closer maximal distance between any action $I$ belonging to a resonant block and any action belonging to its disc.  
	
	Since we will perform  normal forms in the (extended) resonant blocks, we also need an estimate of the small divisors in these sets, namely we have
	\begin{lemma}\label{GCB2}
		For any maximal lattice $\Lambda\in\Z^n_K$ of dimension $j\in\{0,...,n-1\}$, for any $ k\in\Z^n_K\backslash\Lambda$ and for any $  I\in D^\rho_{\Lambda,r_\Lambda}$ one has 
		\begin{equation}\label{small_div_r_steep}
		|\langle k,\omega(I)\rangle |\ge \alpha_\Lambda:\circeq\displaystyle \frac{1}{|\Lambda|K^{q_j-c_j}}\ ,
		\end{equation}
		whereas for any action $I$ in the completely non-resonant block $D_0$ and for any $k\in\Z^n_K$ one has
		\begin{equation}\label{small_div_nr_steep}
		|\langle k,\omega(I)\rangle |\ge \alpha_0:\circeq\displaystyle \frac{1}{K^{q_1}}\ .
		\end{equation}
	\end{lemma}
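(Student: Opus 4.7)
My plan is to treat the two cases separately; in both, the key structural fact is that membership in $D_\Lambda$ (or $D_0$) rules out the existence of resonances strictly richer than $\Lambda$, which translates into a lower bound for frequency combinations in directions transversal to $\Lambda$. The non-resonant case is the simpler warm-up, and the extended resonant case is reduced to a bound at a nearby point of $D_\Lambda$ followed by a Lipschitz transfer.

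\textbf{Non-resonant case.} I would fix $I\in D_0$ and $0\ne k\in\Z^n_K$ and let $k_0$ denote the primitive vector on the ray $\R_+ k$, so that $k = m_0 k_0$ for some integer $m_0$. The one-dimensional lattice $\Lambda_k := \Z k_0$ belongs to $\Z^n_K$, and by~\eqref{maledizione} $D_0$ avoids every one-dimensional resonant zone, so $I\notin Z_{\Lambda_k}$. Unwinding~\eqref{resonant_zone} produces some $m^*\in\Z\setminus\{0\}$ with $|m^* k_0\cdot\omega(I)|\ge\delta_{\Lambda_k}$; multiplying through by $m_0/m^*$ and using the bookkeeping on $|m_0|,|m^*|,|\Lambda_k|$ (all $\lessdot K$) yields the claimed $\alpha_0\circeq 1/K^{q_1}$.

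\textbf{Extended resonant case.} For $\dim\Lambda=j\in\{1,\ldots,n-1\}$, $I\in D^\rho_{\Lambda,r_\Lambda}$, and $k\in\Z^n_K\setminus\Lambda$, I would first establish the bound at an auxiliary point in $D_\Lambda$. By~\eqref{blocco_esteso} there exists $I^*\in D_\Lambda\cap B(I_0,\tR(\eps)-\rho(\eps))$ with $I\in\mathbf{D}^\rho_{\Lambda,r_\Lambda}(I^*)$, and Lemma~\ref{GCB1} provides $|I-I^*|_2\lessdot K^{-q_j/\alg_j}$. I then form the saturated lattice $\Lambda':=\mathrm{Sat}(\Lambda+\Z k)\cap\Z^n_K$, of dimension $j+1$ since $k\notin\Lambda$. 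Since $I^*\in D_\Lambda\subset Z_\Lambda\setminus Z_{\Lambda'}$, there exists $\ell\in\Lambda'$ with $|\ell\cdot\omega(I^*)|\ge\delta_{\Lambda'}$. Choosing a Hermite basis of $\Lambda'$ that extends a basis of $\Lambda$, I decompose $\ell=v+m\tilde k$ with $v\in\Lambda$, $\tilde k$ a generator of $\Lambda'/\Lambda$, and $m\in\Z$; the condition $I^*\in Z_\Lambda$ gives $|v\cdot\omega(I^*)|\lessdot\delta_\Lambda$, and the triangle inequality combined with $\delta_{\Lambda'}/\delta_\Lambda\gtrdot K^{c_j}\to\infty$ (guaranteed by $c_j=q_j-q_{j+1}>0$) produces $|m\tilde k\cdot\omega(I^*)|\gtrdot\delta_{\Lambda'}$. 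Expressing $k$ in the same basis and tracking the resulting factors yields $|k\cdot\omega(I^*)|\gtrdot 1/(|\Lambda|K^{q_{j+1}})=\alpha_\Lambda$. I then transfer the bound from $I^*$ to $I$ via the Lipschitz estimate
\[
|k\cdot\omega(I)-k\cdot\omega(I^*)|\ \le\ |k|_2\,M\,|I-I^*|_2\ \lessdot\ K^{1-q_j/\alg_j}.
\]
A direct computation from~\eqref{parametrini} gives $q_j-\alg_j(q_{j+1}+1)=j(\alg_j-1)\ge 0$, hence $K^{1-q_j/\alg_j}\le K^{-q_{j+1}}$, which is subdominant with respect to $\alpha_\Lambda$ for $K$ large enough (i.e.\ $\eps$ small enough), closing the argument.

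\textbf{Main obstacle.} The delicate point is the lattice decomposition inside the extended resonant case: one has to pick the Hermite (or elementary-divisor) basis of $\Lambda'$ extending a basis of $\Lambda$ so that the coefficients of $\ell=v+m\tilde k$ together with the index relating $k$ to $\tilde k$ combine into exactly the normalization $|\Lambda|$ appearing in $\alpha_\Lambda$. Once this arithmetic bookkeeping is settled (following~\cite{Guzzo_Chierchia_Benettin_2016}), the rest reduces to triangle inequalities and monomial comparisons in $K$.
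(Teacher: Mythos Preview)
The paper does not prove this lemma at all: it simply cites \cite[Lemma~2.2]{Guzzo_Chierchia_Benettin_2016}. Your sketch follows the scheme of that cited argument (as you acknowledge in your final paragraph), and both the non-resonant case and the lattice-decomposition step at the anchor point $I^*\in D_\Lambda$ are in the right spirit.

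There is, however, a genuine quantitative gap in your Lipschitz transfer from $I^*$ to $I$. You bound the variation by $K^{1-q_j/\alg_j}$, then claim this is $\le K^{-q_{j+1}}$ and ``subdominant with respect to $\alpha_\Lambda$''. But $\alpha_\Lambda\circeq 1/(|\Lambda|K^{q_{j+1}})$, so $K^{-q_{j+1}}=|\Lambda|\,\alpha_\Lambda$, which \emph{dominates} $\alpha_\Lambda$ whenever $|\Lambda|$ is large; since $|\Lambda|$ can be of order $K^j$ and is not one of the fixed parameters hidden by $\circeq$, this does not close. Separately, your identity $q_j-\alg_j(q_{j+1}+1)=j(\alg_j-1)$ is only valid for $j\le n-2$: for $j=n-1$ one has $q_{n-1}-\alg_{n-1}(q_n+1)=1-\alg_{n-1}\le 0$, so even the preliminary inequality $K^{1-q_j/\alg_j}\le K^{-q_{j+1}}$ fails there whenever $\alg_{n-1}>1$.

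The resolution in \cite{Guzzo_Chierchia_Benettin_2016} is not to transfer across the full diameter $r_j$ of Lemma~\ref{GCB1}, but to exploit the finer geometry of the extended block: any $I\in D^\rho_{\Lambda,r_\Lambda}$ lies within distance $r_\Lambda=\delta_\Lambda/M\circeq 1/(|\Lambda|K^{q_j})$ of a point on the fast-drift plane through $I^*$, and $r_\Lambda$ already carries the factor $1/|\Lambda|$ needed to match $\alpha_\Lambda$ after the Lipschitz step; the displacement along $\langle\Lambda\rangle$ is then controlled using that the whole disc sits inside $Z_\Lambda$. In short, your ``main obstacle'' is correctly identified as the $|\Lambda|$-bookkeeping, but it bites in the transfer step rather than only in the decomposition, and the argument as written does not survive it.
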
 
	We refer again to \cite[Lemma 2.2]{Guzzo_Chierchia_Benettin_2016}  for a proof of this result. 
	
	Finally,  a key ingredient in order to insure stability in the steep case is the fact that, when possibly exiting a resonant zone along the plane of fast drift, the actions must enter another resonant zone associated to a lattice of lower dimension. This is the content of 
	\begin{lemma}\label{GCB3}
		Let $\Lambda,\Lambda'$ two maximal lattices of $\Z^n_K$ having the same dimension $j\in\{1,...,n-1\}$. 
		Then one has
		\begin{equation}
		\text{closure}\left( D^\rho_{\Lambda,r_\Lambda}\right)\cap Z_{\Lambda'}=\varnothing\ .
		\end{equation}
	\end{lemma}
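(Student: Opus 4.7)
\textbf{Proof plan for Lemma \ref{GCB3}.}
The plan is to argue by contradiction: assume there exists $I^{*}\in \mathrm{closure}(D^\rho_{\Lambda,r_\Lambda})\cap Z_{\Lambda'}$ with $\Lambda\neq \Lambda'$ (the case $\Lambda=\Lambda'$ being obviously excluded by the statement). The whole argument rests on confronting two inequalities on a single integer vector $k$, one coming from the definition of $Z_{\Lambda'}$ and the other from the small–divisor bound of Lemma \ref{GCB2}.

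First, I would exhibit a good vector $k\in\Lambda'\setminus\Lambda$ with $|k|_1\le K$. Since $\Lambda$ and $\Lambda'$ are both maximal lattices of $\Z^n_K$ of the same dimension $j$, neither can be strictly contained in the other; hence $\Lambda'\not\subset\Lambda$, and by maximality $\Lambda'$ admits a $\Z$-basis formed by vectors of $\Z^n_K$, at least one of which -- call it $k$ -- must lie in $\Lambda'\setminus\Lambda$. This $k$ satisfies $k\in\Z^n_K\setminus\Lambda$ in the sense required by Lemma \ref{GCB2}.

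Next, I would combine the two estimates on $|k\cdot\omega(I^*)|$. Since $I^{*}\in Z_{\Lambda'}$ and $k\in\Lambda'$, the definition \eqref{resonant_zone} gives
\begin{equation*}
|k\cdot\omega(I^{*})|<\delta_{\Lambda'}\circeq \frac{1}{|\Lambda'|\,K^{q_j}}\,.
\end{equation*}
On the other hand, Lemma \ref{GCB2} applied on the extended block $D^\rho_{\Lambda,r_\Lambda}$ yields $|k\cdot\omega(I)|\ge\alpha_\Lambda\circeq 1/(|\Lambda|K^{q_j-c_j})$ for all $I\in D^\rho_{\Lambda,r_\Lambda}$; by continuity of $\omega$ this lower bound passes to the closure, so $|k\cdot\omega(I^{*})|\ge\alpha_\Lambda$.

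The concluding step is to check that the constants hidden in $\circeq$ are compatible with $\alpha_\Lambda>\delta_{\Lambda'}$ for all $K$ large enough, i.e. for $\eps$ small enough. By construction $c_j=q_j-q_{j+1}=n(p_j-p_{j+1})+1\ge 1$, so the ratio $\alpha_\Lambda/\delta_{\Lambda'}$ is, up to absolute constants, $(|\Lambda'|/|\Lambda|)\,K^{c_j}$. The only delicate point -- and the one I expect to be the main obstacle -- is controlling the lattice-volume ratio $|\Lambda'|/|\Lambda|$: since both lattices are generated by vectors of $\ell^1$-norm $\le K$, one has $\max(|\Lambda|,|\Lambda'|)\lesssim K^{j}$, so the ratio is certainly bounded from below by a negative power of $K$. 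This is exactly where the fine tuning of the widths $\delta_\Lambda$ in \eqref{resonant_zone} pays off: the constants in $\circeq$ and the choice of $K=K(\eps)$ made in the subsequent section have to be taken so that the explicit gain $K^{c_j}$ dominates the adverse contribution coming from $|\Lambda'|/|\Lambda|$. Once this is ensured, $\alpha_\Lambda>\delta_{\Lambda'}$, which contradicts the chain $\alpha_\Lambda\le|k\cdot\omega(I^{*})|<\delta_{\Lambda'}$ and concludes the proof.
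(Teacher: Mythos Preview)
The paper does not supply its own proof of this lemma; it simply refers to \cite{Guzzo_Chierchia_Benettin_2016}, Lemma~2.3. Your contradiction argument is precisely the one used there: pick $k\in(\Lambda'\cap\Z^n_K)\setminus\Lambda$ (such a $k$ exists because $\Lambda'\not\subset\Lambda$ by maximality and equal rank, and $\Lambda'$ is generated inside $\Z^n_K$), then confront the upper bound $|k\cdot\omega(I^*)|<\delta_{\Lambda'}$ coming from $I^*\in Z_{\Lambda'}$ with the lower bound $|k\cdot\omega(I^*)|\ge\alpha_\Lambda$ inherited from Lemma~\ref{GCB2} and passed to the closure by continuity of $\omega$. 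The point you single out as the main obstacle---that the gain $K^{c_j}$ must beat the adverse lattice--volume ratio $|\Lambda|/|\Lambda'|$---is indeed where the work sits; in \cite{Guzzo_Chierchia_Benettin_2016} the explicit constants suppressed here behind $\circeq$ are tuned exactly so that $\alpha_\Lambda\ge\delta_{\Lambda'}$ holds for every admissible pair of distinct equal-rank $K$-lattices, which closes the contradiction.
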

	Once again, the proof of this Lemma can be found in \cite{Guzzo_Chierchia_Benettin_2016} (Lemma 2.3). 
	
	With the ingredients of this paragraph, we are able to prove stability. 
	\subsection{Proof of Theorem \ref{MainTh3}}
	We start by giving the standard estimates of stability in the completely non-resonant extended block $D_0^\rho$. Note that the following bounds do not require any geometric assumption on the integrable part $h$.
	\begin{lemma}[Non-resonant Stability Estimates]\label{nr_stab_steep} For any sufficiently small $\eps$ and for any time $t$ satisfying 
		\begin{equation}\label{t_nr_steep}
		|t|\le T_0:\circeq\frac{1}{(1+a\ell)|\ln\eps|^{\ell-1}\  \eps^{a(\ell-1)+1/2}}\quad ,\qquad a:=\frac{1}{2np_1}\ ,
		\end{equation}
		any initial condition $I(0)\in D_0^\rho$ drifts at most as
		\begin{equation}\label{deltaI nr_steep}
		|I(t)-I(0)|_2\ledot  \eps^{1/2}\ .
		\end{equation}	
	\end{lemma}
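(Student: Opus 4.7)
The plan is to implement the general strategy of Section \ref{strategia parchetto} in the simplest (completely non--resonant) case: combine the analytic smoothing of Theorem \ref{th_an_smooth} with a P\"oschel-type non-resonant normal form (Appendix \ref{loch_app}) on $D_0^\rho$, and then bound the action drift produced by the two resulting remainder terms.

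First, I would apply Theorem \ref{th_an_smooth} with an analyticity width $s\in(0,1]$ to be specified below, producing an analytic Hamiltonian $\bH_s=h+\mathbf{f}_s$ with $\norma{\mathbf{f}_s}_{s,s}\lessdot \eps$ together with the H\"older control $\normcl{f-\mathbf{f}_s}{1}{\pallaro}\lessdot s^{\ell-1}\eps$. Then, since Lemma \ref{GCB2} provides the uniform non-resonance bound $|k\cdot\omega(I)|\ge\alpha_0\circeq K^{-q_1}$ for all $k\in\Z^n_K\setminus\{0\}$ on $D_0^\rho$, P\"oschel's scheme yields an analytic, close-to-identity symplectic transformation $\Psi$ on a complex neighborhood of $D_0^\rho$ such that
\[
\bH_s\circ\Psi \;=\; h + \langle\mathbf{f}_s\rangle_\theta + \mathbf{f}_s^{\ast},
\]
where the analytic remainder $\mathbf{f}_s^{\ast}$ has weighted Fourier norm exponentially small in $Ks$. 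The identity \eqref{eq:smoothedNF} then gives
\[
H\circ\Psi \;=\; h + \langle\mathbf{f}_s\rangle_\theta + \bigl[\mathbf{f}_s^{\ast} + (f-\mathbf{f}_s)\circ\Psi\bigr],
\]
and since the first two summands are independent of $\theta$, Hamilton's equations deliver the pointwise bound $|\dot I|_2 \lessdot |\partial_\theta\mathbf{f}_s^{\ast}| + \normcl{f-\mathbf{f}_s}{1}{}\cdot\norma{D\Psi}$.

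The crucial step is the parameter tuning. Set $a=1/(2np_1)$ as in \eqref{ab}, pick $K\circeq \eps^{-a}$, an analyticity radius in action $r\circeq \eps^{1/2}$, and --- this is the new ingredient specific to our H\"older setting --- the slightly enlarged width $s\circeq |\ln\eps|\cdot\eps^a$. Using $q_1=np_1-1$ and the identity $a(q_1+1)=1/2$, one checks that the P\"oschel smallness condition $\eps K^{q_1}/(rs)\lesssim$ const reduces to $1/|\ln\eps|\lesssim$ const, which is satisfied for $\eps$ small enough; and that the inequality $Ks\gtrsim \ell\, a|\ln\eps|$ makes the analytic remainder $|\partial_\theta\mathbf{f}_s^{\ast}|$ exponentially small in $|\ln\eps|$, hence dominated by the H\"older piece $\normcl{f-\mathbf{f}_s}{1}{}\lessdot s^{\ell-1}\eps\circeq |\ln\eps|^{\ell-1}\eps^{1+a(\ell-1)}$. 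Integrating over $|t|\le T_0$ then produces exactly
\[
|I(t)-I(0)|_2 \;\lessdot\; T_0\cdot|\ln\eps|^{\ell-1}\eps^{1+a(\ell-1)} \;\lessdot\; \eps^{1/2},
\]
matching the announced bound.

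The final step is a standard bootstrap: the normal form is valid only as long as the orbit stays in the complex neighborhood of $D_0^\rho$, whose width in action is of order $r\circeq\eps^{1/2}$; if $t^{\ast}\in[0,T_0]$ were the first exit time, the drift estimate applied on $[0,t^{\ast}]$ would yield $|I(t^{\ast})-I(0)|_2\lessdot \eps^{1/2}<r$, contradicting the exit (the required slack being guaranteed by $\rho(\eps)\circeq\eps^b$ with $b<1/2$). I expect the main difficulty to be precisely this three-fold parameter compatibility: satisfying simultaneously P\"oschel's smallness condition, the need to bury the analytic remainder $\mathbf{f}_s^{\ast}$ under the Jackson--Moser--Zehnder remainder $(f-\mathbf{f}_s)\circ\Psi$, and the drift-versus-analyticity-radius inequality. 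The logarithmic enlargement of $s$ is exactly what reconciles all three, and it is the source of the unusual $|\ln\eps|^{\ell-1}$ factor in the denominator of $T_0$ --- a feature absent in the purely analytic setting where $s$ is fixed.
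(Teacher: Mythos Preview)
Your proposal is correct and follows essentially the same approach as the paper: analytic smoothing via Theorem \ref{th_an_smooth}, P\"oschel's non-resonant normal form on $D_0^\rho$ with the small-divisor bound of Lemma \ref{GCB2}, the same parameter tuning $K\circeq\eps^{-a}$, $r\circeq\eps^{1/2}$, $s\circeq|\ln\eps|\,\eps^a$, the observation that the Jackson--Moser--Zehnder remainder $s^{\ell-1}\eps$ dominates the exponentially small analytic remainder $e^{-Ks/6}\eps$, and the concluding bootstrap. One minor imprecision: the P\"oschel smallness threshold in Lemma \ref{poschel normal form} is $\epsilon\lessdot \alpha_0 r/K$, i.e.\ $\eps K^{q_1+1}/r\lessdot 1$, with no $s$ in the denominator; with your parameters this reduces to a fixed threshold on the free constant (the paper's $\epsilon_0$), not to $1/|\ln\eps|\lessdot 1$, but the conclusion that the threshold is met for small $\eps$ is unaffected.
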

	\begin{proof}
		
	Our goal is to apply Pöschel's normal form (see Lemma \ref{poschel normal form}) to the smoothed Hamiltonian of Theorem \ref{th_an_smooth} with analyticity widths $r$ and $s$.  
	
	$\bullet$ {\bf Normal form}
	
	By monotonicity of the Fourier norm w.r.t. the action variables and \eqref{fourier_est_lissage} we immediately get, 
	\begin{equation}\label{epsprimo}
	||\smooth{f}||_{{r,s}}\le ||\smooth{f}||_{{s,s}} \le  \santi(R, \ell, n ) \eps  =:\epsilon\ ,  
	\end{equation}
	for any $r\le s$,  where we set $ \eps:=|f|_{C^\ell(\pallaro)} $.	
	
	Denote
		$$
	\cB_{\varrho,\sigma}:=\{(I,\teta)\in\C^n:|I-B_\infty(0,R/4)|_2<\varrho\ ,\ \  \teta\in\T^n_\sigma\}\ \,,
	$$
	since $h$ is analytic,  we chose not to regularize it further.  So let $\tH_s := h(I) + \tf_s$ be the corresponding analytic Hamiltonian defined on $\cB_{s,s}$.  By Pöschel's Lemma \ref{poschel normal form} applied in the complex extension, denoted $\cD^\rho_{0,r,s}$, of the non-resonant block $D_0^\rho$, with $\varrho' \rightsquigarrow r, \varrho \rightsquigarrow s, \s \rightsquigarrow s$, if
	\begin{equation}\label{soglie}
	\epsilon\ledot\frac{\alpha_0 \,r}{K}\ ,\ \ r\ledot \min\left(\frac{\alpha_0}{K},s\right)\ ,\ \ Ks\ge 6
	\end{equation}
	are satisfied, then there exists a symplectic diffeomorfism $\Psi_0$ that puts $\tH_s$ into resonant normal form: 
	\begin{equation}\label{resnf}
	\tH_s \circ \Psi_0 =  h(I) + \tg + \tf^*_s\ ,\ \ \{h,\tg\}=0,\quad \Psi_0:\cD^\rho_{0,r/2,s/6}\longrightarrow\cD^\rho_{0,r,s}\ .
	\end{equation}
 In particular the resonant and non-resonant part satisfy, respectively,
	\begin{equation}\label{resto}
	||{\tg-\bf g_0}||_{r/2,s/6}\ledot \epsilon
	\ ,\ \  
	||\tf_{s}^*||_{r/2,s/6}\le e^{-\frac{Ks}{6}}\epsilon
	\end{equation}
	where ${\bf g_0}:=P_\Lambda P_K\smooth{f}$ and $P_\Lambda,P_K$ are the projectors defined in Lemma \ref{poschel normal form}.
	
	\vskip2mm

$\bullet$ {\bf Setting of the initial parameters}

\vskip2mm

Let us set the following dependences on $\epsilon$ of the ultraviolet cut-off $K$ and of the analyticity widths $r,s$
		\begin{equation}\label{scegli}
		K:= \left(\frac{\epsilon_0}{\epsilon}\right)^a\ ,\ \ s:\circeq\,\left(\frac{\epsilon}{\epsilon_0}\right)^{a}\left|\ln\left[\left(\frac{\epsilon}{\epsilon_0}\right)^{6(1+a\ell)}\right]\right|\ ,\ \ r:\circeq \frac{1}{K^{1+q_1}}\circeq \left(\frac{\epsilon}{\epsilon_0}\right)^{a(1+q_1)}= \left(\frac{\epsilon}{\epsilon_0}\right)^{1/2}\ .
		\end{equation}
	where $\epsilon_0$ is a free parameter and $\epsilon\le \epsilon_0$ since $K\ge 1$. 

\begin{rmk}\label{pippone}
The freedom in the definitions above is  subordinated to the fact that, in order for the construction to be meaningful,  the reminder produced by the normal form must be less than or equal to the size of the additional term  $(H - \smooth{H})\circ\Psi_0$,  byproduct of the analytic smoothing.  As we are working in finite regularity, the latter is expected to be polynomial.  The reminder of the normal form being of order $e^{-Ks}$,  one must have $Ks \sim O(|\log\epsilon|^c)$ for some $c>0$. Since $s$ tunes the size of the remainder yielded by the analytic smoothing,  it has to be polynomial.  Hence one is left with two possibilities: either the choice we made in \eqref{scegli},  or to set $K \sim \epsilon^{-a} |\log\epsilon|^{c} $ and $s \sim \epsilon^a$. However this second choice would worsen the exponents of stability,  since the thresholds of applicability in the normal form lemma strongly depend on $K$.
Of course, to deal with other regularity classes, such as the Gevrey one,  other choices must be made.
\end{rmk}
	
		By plugging the choices \eqref{scegli} into the three thresholds in \eqref{soglie}, it is easy to see that there exists an appropriate choice of $\epsilon_0$ that makes the three conditions to be simultaneously satisfied. Hence, for the H\"older Hamiltonian $$H=h+f = \tH_s + f - \tf_s,\qquad \tH_s := h + \tf_s$$ we can write
		\begin{equation}\label{effetto}
		H\circ\Psi_0=\smooth{H}\circ\Psi_0+(f-\smooth{f})\circ\Psi_0=h + \tf_s^*+ (f-\smooth{f})\circ\Psi_0 \ .
		\end{equation}
	 Note that since we are in a completely non-resonant block, the resonant term $\tg$ does not appear in the normal form. 
		Now, the normal form in Lemma \ref{poschel normal form} insures that there exists a constant $\xi>1$ such that any initial condition $(I(0),\teta(0))\in D_0^\rho\times\T^n$ is mapped by $\Psi_0$ into $(\tI(0),\vartheta(0))\in (\cD^\rho_{0,\frac{r}{32\xi}})^\R\times\T^n$. For any time $t$ such that the normalized flow $\Phi^t_{H\circ \Psi_0}: (\tI(0),\vartheta(0))\longmapsto(\tI(t),\vartheta(t))$ starting at $(\cD^\rho_{0,\frac{r}{32\xi}})^\R\times\T^n$ does not exit from $(\cD^\rho_{0,r/2})^\R\times\T^n$, the evolution of the normalized variables reads $(i=1,...,n)$
	\begin{align}\label{accroissements_finis}
	\begin{split}
	|\tI_i(t)-\tI_i(0)|&\le\int_0^t \sup_{(\tI,\vartheta)\in (\cD^\rho_{0,\frac{r}{32\xi}})^\R\times \T^n}\bigg(\left|(\partial_{\vartheta_i} \tf^*_s)\circ\Phi^t_{H\circ \Psi_0}\right|+\left|\{\partial_{\vartheta_i} [(f-\smooth{f})\circ\Psi_0]\}\circ\Phi^t_{H\circ \Psi_0}\right|\bigg) dt\\
	& \le \int_0^t\left(\sup_{(\tI,\vartheta)\in (\cD^\rho_{0,r/2})^\R\times \T^n}|\partial_{\vartheta_i} \tf^*_s|+\sup_{(\tI,\vartheta)\in (\cD^\rho_{0,r/2})^\R\times \T^n}|\partial_{\vartheta_i}[(f-\smooth{f})\circ\Psi_0]|\right) dt\\
	&\le|t|\left[ \frac{||\smooth{f}^*||_{r/2,s/6}}{s}+	\norma{f-\smooth{f}}_{C^1(B_\infty(0,R/2)\times\T^n)}\times \sup_{(\tI,\vartheta)\in (\cD^\rho_{0,r/2})^\R\times \T^n}|\partial_{\vartheta_i}\Psi_0|\right]\ .
	\end{split}
	\end{align}
	The normal form Lemma \ref{poschel normal form}, together with the choices in \eqref{scegli} and the definition of $\epsilon$ in \eqref{epsprimo}, assures that 
	\beq\label{resto_analitico}
	||\smooth{f}^*||_{r/2,s/6}\le e^{-Ks/6}\,\epsilon\ledot \exp\left\{\ln\left[\left(\frac{\epsilon}{\epsilon_0}\right)^{1+a\ell}\right]\right\}\epsilon\ledot \eps^{2+a\ell}\ ,
	\eeq
	whereas, by Theorem \ref{th_an_smooth}, we have
	\beq \label{diff}
	\norma{f-\smooth{f}}_{C^1(B_\infty(0,R/2)\times\T^n)}\ledot s^{\ell-1}\eps\ledot \left|\ln\left[\left(\frac{\epsilon}{\epsilon_0}\right)^{6(1+a\ell)}\right]\right|^{\ell-1} \eps^{1+a(\ell-1)}\ledot \left|\ln\left(\eps^{6(1+a\ell)}\right)\right|^{\ell-1} \eps^{1+a(\ell-1)}\ .
	\eeq
	Finally, by writing in the usual way $|\d_{\vartheta_i} \Psi_0|=|\d_{\vartheta_i} (\Psi_0-\id+\id)|$, the Cauchy estimates together with the bounds in \ref{taglia_trasf} imply (since $r\le s$)
	\beq\label{der}
	\sup_{(\tI,\vartheta)\in (\cD^\rho_{0,r/2})^\R\times \T^n}|\partial_{\vartheta_i}\Psi_0|_2\ledot 1+\max\left\{\frac{1}{24\xi},\frac{1}{32\xi}\frac{r}{s}\right\}\ledot 1\ .
	\eeq
	It is easy to see from estimates \eqref{resto_analitico}, \eqref{diff} and \eqref{der} that, in order, the remainder from the analytic smoothing dominates on the one coming from the normal form, namely 
	$$
	\frac{||\smooth{f}^*||_{r/2,s/6}}{s}\ll \norma{f-\smooth{f}}_{C^1(B_\infty(0,R/2)\times\T^n)}\times \sup_{(\tI,\vartheta)\in (\cD^\rho_{0,r/2})^\R\times \T^n}|\partial_{\vartheta_i}\Psi_0|
	$$
	so that finally we can write
	\beq
	|\tI(t)-\tI(0)|_2\ledot |t|\left|\ln\left(\eps^{6(1+a\ell)}\right)\right|^{\ell-1} \eps^{1+a(\ell-1)}\ .
	\eeq
	Hence, over a time 
	$$
	|t|\ledot \frac{r}{\left|\ln\left(\eps^{6(1+a\ell)}\right)\right|^{\ell-1} \eps^{1+a(\ell-1)}}\ledot  \frac{1}{\left|\ln\left(\eps^{6(1+a\ell)}\right)\right|^{\ell-1} \eps^{1/2+a(\ell-1)}}
	$$
	one has $|\tI(t)-\tI(0)|_2\ledot r$ and, by scaling back to the original variables,
	$$
	|I(t)-I(0)|_2\ledot r\ledot \eps^{1/2}\ .
	$$	
	\end{proof}	
	As for the dynamics in the resonant blocks, we have the following
	\begin{lemma}\label{r_stab_steep}
		Consider a maximal lattice $\Lambda\subset \Z^n_K$ of dimension $j\in\{1,...,n-1\}$.  There exists $\mathtt{T}_j>0$ such that for any sufficiently small $\eps$ and for any initial condition $(I(0),\teta(0))\in \bigg(D_\Lambda\cap B\big(I_0,\tR(\eps)-(j+1)\rho(\eps)\big)\bigg) \times\T^n$,  if one sets
		\begin{align}\label{Tj}
		T_\Lambda:= & \mathtt{T}_j\times \frac{r_\Lambda}{|\ln\eps^{6(1+a\ell)}|^{\ell-1}\,\eps^{1+a(\ell-1)}}\quad,\qquad a:=\frac{1}{2np_1}\ ,
		\end{align}
	 and considers the time of escape of the flow generated by $H$ from the extended resonant block
		\begin{align}
		\tau_e:= & \inf\left\{t\in\R: \Phi^t_{H}\bigg(D_\Lambda\cap B\big(I_0,\tR(\eps)-(j+1)\rho(\eps)\big)\times\T^n\bigg)\not\subset D^\rho_{\Lambda,r_\Lambda}\times\T^n\right\}\ ,
		\end{align}
		the following dichotomy applies:
		\begin{enumerate}
			\item If $|\tau_e|\ge T_\Lambda$ one has
			\begin{equation}
			|I(t)-I(0)|_2< \rho(\eps) 
			\end{equation} 
			over a time $|t|\le T_\Lambda$;
			\item If $|\tau_e|< T_\Lambda$ there exists $i\in\{0,...,j-1\}$ such that 
			$$
			I(\tau_e)\in D_i\cap \bigg(B\big(I_0,\tR(\eps)-j\rho(\eps)\big)\bigg)\ .
			$$
		\end{enumerate}
	\end{lemma}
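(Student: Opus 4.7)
The strategy mirrors closely the non-resonant proof of Lemma \ref{nr_stab_steep}, but with the crucial refinement that we exploit the resonant part of the normal form only along the fast-drift plane $\langle\Lambda\rangle$, while controlling the orthogonal drift by the remainder. The plan is as follows. First, apply Theorem \ref{th_an_smooth} to produce the analytic approximation $\tH_s=h+\tf_s$ on a suitable complex extension, then invoke P\"oschel's Lemma \ref{poschel normal form} on the complexification $\cD^\rho_{\Lambda,r,s}$ of the extended block $D^\rho_{\Lambda,r_\Lambda}$, using the lower bound $\alpha_\Lambda\circeq 1/(|\Lambda|K^{q_j-c_j})$ from Lemma \ref{GCB2} for non-resonant small denominators. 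This yields an analytic symplectic diffeomorphism $\Psi_\Lambda:\cD^\rho_{\Lambda,r/2,s/6}\to\cD^\rho_{\Lambda,r,s}$ such that
\begin{equation*}
H\circ\Psi_\Lambda\;=\;h+\tg+\tf^*_s+(f-\tf_s)\circ\Psi_\Lambda,\qquad \{h,\tg\}=0,
\end{equation*}
where $\tg$ contains only harmonics in $\Lambda$, $\|\tf^*_s\|_{r/2,s/6}\lessdot e^{-Ks/6}\eps$, and $\|\tg-P_\Lambda P_K\tf_s\|_{r/2,s/6}\lessdot\eps$. Parameters $K,r,s$ are set (as in \eqref{scegli}, with an appropriate adjustment taking $\alpha_\Lambda$ instead of $\alpha_0$ into account) so that the thresholds of applicability in \eqref{soglie} are satisfied while keeping $Ks\circeq|\ln\eps|$; by the specific form of $q_j$ and $c_j$ in \eqref{parametrini}, the choice $K\circeq(\eps_0/\eps)^a$ still works uniformly in the multiplicity $j$.

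Second, pass to the normalized variables $(\tI,\vartheta)=\Psi_\Lambda^{-1}(I,\theta)$ and split the drift as $\tI(t)-\tI(0)=\pi_{\langle\Lambda\rangle}(\tI(t)-\tI(0))+\pi_{\Lambda^\perp}(\tI(t)-\tI(0))$. Since $\partial_\vartheta\tg$ lies in $\langle\Lambda\rangle$, the orthogonal component of the vector field is generated only by the two small remainders, so exactly the estimates \eqref{resto_analitico}--\eqref{der} yield
\begin{equation*}
|\pi_{\Lambda^\perp}(\tI(t)-\tI(0))|_2\;\lessdot\;|t|\,|\ln(\eps^{6(1+a\ell)})|^{\ell-1}\,\eps^{1+a(\ell-1)}.
\end{equation*}
As long as the orbit remains inside $D^\rho_{\Lambda,r_\Lambda}$, the parallel component, which is driven by $\tg$ and can a priori be large, is geometrically confined by Lemma \ref{GCB1}: namely $|\pi_{\langle\Lambda\rangle}(\tI(t)-\tI(0))|_2\le r_j\circeq K^{-q_j/\alg_j}$. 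Choosing $\mathtt{T}_j$ so that, for $|t|\le T_\Lambda$ as in \eqref{Tj}, the orthogonal bound is at most $r_\Lambda/2$ and $r_j<\rho(\eps)/2$ (which follows from comparing $q_j/\alg_j$ with the definition of $a$ and $\tR(\eps)$), one obtains $|\tI(t)-\tI(0)|_2<\rho(\eps)$. Reverting through $\Psi_\Lambda$, whose displacement from the identity is of order $r/2\ll\rho(\eps)$, gives $|I(t)-I(0)|_2<\rho(\eps)$, which proves item (1) of the dichotomy.

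If instead the escape time $\tau_e$ satisfies $|\tau_e|<T_\Lambda$, the same estimate shows that $I(\tau_e)$ still lies in $B(I_0,\tR(\eps)-j\rho(\eps))$, so only the resonant structure at $I(\tau_e)$ needs to be identified. By construction $I(\tau_e)\in\overline{D^\rho_{\Lambda,r_\Lambda}}\setminus D^\rho_{\Lambda,r_\Lambda}$, hence Lemma \ref{GCB3} rules out $I(\tau_e)\in Z_{\Lambda'}$ for any other maximal lattice $\Lambda'$ of the same dimension $j$; moreover $I(\tau_e)$ cannot belong to $Z_{\Lambda''}$ with $\dim\Lambda''>j$ and $\Lambda''\supset\Lambda$, since such zones are contained in $Z_\Lambda$ and were removed from $D_\Lambda$ in \eqref{blocchetti} (so they would contradict the fact that the orbit just exited the extended block built out of $D_\Lambda$). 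Therefore $I(\tau_e)$ belongs to some block $D_i$ with $i<j$, which yields item (2). The main technical obstacle will be the bookkeeping on the parameter choices: one must simultaneously ensure the three Pöschelian thresholds \eqref{soglie} with the smaller denominator $\alpha_\Lambda$, the geometric bound $r_j<\rho(\eps)$, and the matching of the two remainders (normal-form versus analytic-smoothing) so that the stability time $T_\Lambda$ has the same exponent in $\eps$ as in the non-resonant Lemma \ref{nr_stab_steep}, independently of the multiplicity $j$.
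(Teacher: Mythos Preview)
Your overall strategy matches the paper's approach closely (analytic smoothing plus P\"oschel's normal form on the extended resonant block, orthogonal drift controlled by the two remainders, geometric confinement via Lemma \ref{GCB1}). There are however two genuine issues.

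In item (1), your invocation of Lemma \ref{GCB1} to bound the \emph{parallel} component $|\pi_{\langle\Lambda\rangle}(\tI(t)-\tI(0))|_2$ is not what that lemma provides: it bounds the \emph{total} distance $|I-I'|_2$ for $I'\in\mathbf{D}^\rho_{\Lambda,r_\Lambda}(I)$ in the \emph{original} action variables. The correct chain (as in the paper) is: first bound the orthogonal drift of $\tI$ by $r_\Lambda/4$; then, adding the transformation size $|I-\tI|_2\le r_\Lambda/(32\xi)$ at both endpoints, deduce $|I(t)-I(0)-\Pi_{\langle\Lambda\rangle}(\tI(t)-\tI(0))|_2\le\tfrac34 r_\Lambda$; conclude that $I(t)\in\mathbf{D}^\rho_{\Lambda,\frac34 r_\Lambda}(I(0))$; \emph{only then} apply Lemma \ref{GCB1} to get $|I(t)-I(0)|_2\le r_j<\rho(\eps)$. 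Your scheme of bounding parallel and orthogonal pieces of $\tI$ separately and summing is circular, since the parallel bound is precisely what Lemma \ref{GCB1} is meant to deliver a posteriori.

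The more serious gap is in item (2). To conclude $I(\tau_e)\in D_i$ with $i<j$ via the decomposition $B_2(I_0,\tR(\eps))=\big(\bigcup_{i<j}D_i\big)\cup Z_j$, you need $I(\tau_e)\notin Z_j$, i.e.\ $I(\tau_e)\notin Z_{\Lambda'}$ for \emph{every} maximal lattice $\Lambda'$ of dimension $j$, including $\Lambda$ itself. Lemma \ref{GCB3} handles $\Lambda'\neq\Lambda$, but you never argue that $I(\tau_e)\notin Z_\Lambda$. Exiting $D^\rho_{\Lambda,r_\Lambda}$ does not by itself mean exiting $Z_\Lambda$: the extended block is a strict subset of $Z_\Lambda$, and nothing you wrote excludes $I(\tau_e)\in Z_\Lambda\setminus D^\rho_{\Lambda,r_\Lambda}$. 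The paper's argument here is the crux: since the same orthogonal-drift estimate as in case (1) gives $I(\tau_e)\in\text{closure}\big(\mathbf{D}^\rho_{\Lambda,\frac34 r_\Lambda}(I(0))\big)$, and since $I(\tau_e)\in B_2(I_0,\tR(\eps)-j\rho(\eps))\subset B_2(I_0,\tR(\eps)-\rho(\eps))$, two of the three defining conditions of the disc \eqref{disc} are still satisfied at $\tau_e$ with $I^*=I(0)$; therefore the failure of membership in $D^\rho_{\Lambda,r_\Lambda}$ can only come from the remaining condition, forcing $I(\tau_e)\notin Z_\Lambda$. Your discussion of higher-multiplicity zones $Z_{\Lambda''}$ with $\Lambda''\supset\Lambda$ is both unnecessary (once $I(\tau_e)\notin Z_j$ the decomposition \eqref{covering} does the rest) and incorrect as stated: such zones were removed from the \emph{block} $D_\Lambda$, but the discs \eqref{disc} forming $D^\rho_{\Lambda,r_\Lambda}$ are taken inside $Z_\Lambda$, not inside $D_\Lambda$, so the extended block may well overlap $Z_{\Lambda''}$.
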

	\begin{proof}
		We start by considering the case $|\tau_e|\ge T_\Lambda$. In a similar way to what we did in the proof of Lemma \ref{nr_stab_steep}, we apply Pöschel's Normal Form (see Lemma \ref{poschel normal form}) to the smoothed Hamiltonian $\tH_s$ in the complex extension $(D^\rho_{\Lambda,r_\Lambda})_{r_\Lambda}$ of the real extended resonant block $D^\rho_{\Lambda,r_\Lambda}$, with parameters
		\begin{equation}\label{bidibibodibibu}
		K:= \left(\frac{\epsilon_0}{\epsilon}\right)^a\ ,\ \ s:\circeq\,\left(\frac{\epsilon}{\epsilon_0}\right)^{a}\left|\ln\left[\left(\frac{\epsilon}{\epsilon_0}\right)^{6(1+a\ell)}\right]\right|\ ,\ \ r_\Lambda:\circeq \frac{1}{|\Lambda|K^{q_j}}
		\end{equation}
		and with a small divisor estimate given by formula \eqref{small_div_r_steep} in Lemma \ref{GCB2}, namely
		\begin{equation}\label{buongiornissimo}
		\alpha_\Lambda:\circeq\frac{1}{|\Lambda|K^{q_j-c_j}}\ .
		\end{equation}
		As before, we plug \eqref{bidibibodibibu} and \eqref{buongiornissimo} into Pöschel's thresholds \eqref{soglie_Poschel} -- \eqref{soglie_Poschel2} and we derive the conditions
		\begin{align}
			\begin{split}
			\epsilon\ledot \frac{\alpha_\Lambda r_\Lambda}{K} \quad & \longleftrightarrow \quad \left(\frac{\epsilon}{\epsilon_0}\right)^{1-an(p_j+p_{j+1})}\ledot 1\qquad j\in\{1,\ldots,n-1\}\\
			r_\Lambda\ledot \frac{\alpha_\Lambda}{K}\quad & \longleftrightarrow \quad \left(\frac{\epsilon}{\epsilon_0}\right)^{an(p_j-p_{j+1})}\ledot 1 \qquad j\in\{1,\ldots,n-1\}\\
			Ks\ge 6\quad  & \longleftrightarrow\quad 
			\left|\ln\left[\left(\frac{\epsilon}{\epsilon_0}\right)^{6(1+a\ell)}\right]\right|\ge 6\ .
			\end{split}
		\end{align}
		By definition of the parameters $p_j$ in \eqref{parametrini}, it is easy to see that the first two conditions are always satisfied by appropriately choosing $\epsilon_0$, whereas the last condition is trivial. 
		
		Therefore, by taking into account the notations in \eqref{dominos}, there exists a symplectic transformation $\Psi_\Lambda:(D^\rho_{\Lambda,r_\Lambda})_{r_\Lambda/2}\times\T^n_{s/6}\longrightarrow (D^\rho_{\Lambda,r_\Lambda})_{r_\Lambda}\times\T^n_{s}\ ,\ \ (\tI,\vartheta)\longmapsto(I,\teta)$, that takes $H$ into the resonant normal form
		\begin{equation}
		H\circ\Psi_\Lambda=\smooth{H}\circ\Psi_\Lambda+(H-\smooth{H})\circ\Psi_\Lambda=h + \tg+\tf_s^*+ (f-\smooth{f})\circ\Psi_\Lambda
		\end{equation} 
		with $\{h,\tg\}=0$, $||\tf_s^*||_{r/2,s/6}\ledot e^{-Ks/6}\ \eps$. 
		
		Now, for any time $t$ such that $|t|\le T_\Lambda\le |\tau_e|$, the dynamics on the subspace orthogonal to the plane of fast drift $\langle\Lambda\rangle$ can be controlled in the usual way by exploiting the smallness of the non-resonant remainder $\tf_s^*$, as well as that of $(f-\tf_s)\circ\Psi_{\Lambda}$. Namely, for any initial position in the actions $I(0)\in D_\Lambda$, by the first estimate in \eqref{taglia_trasf} one has that the associated normalized coordinate satisfies $\tI(0)\in (D_\Lambda)^\R_{\frac{r_\Lambda}{32\xi}}$, where $(D_\Lambda)^\R_{\frac{r_\Lambda}{32\xi}}$ represents the real projection of the complex extension of width $\frac{r_\Lambda}{32\xi}$ around $D_\Lambda$ (not to be confused with the extended resonant block) and where $\xi>1$ is a free parameter that can be suitably adjusted. By taking into account the fact that $\Pi_{\langle\Lambda\rangle^\perp}(\partial_{\vartheta} \tg)=0$, one can write 
			\begin{align}\label{accroissements_finis_2}
		\begin{split}
		&\left|\Pi_{\langle\Lambda\rangle^\perp}\big(\tI(t)-\tI(0)\big)\right|_2\\
		& \le \int_0^t \sup_{(\tI,\vartheta)\in \left(D_\Lambda\right)_{\frac{r_\Lambda}{32\xi}}^\R\times \T^n}\bigg(\left|\Pi_{\langle\Lambda\rangle^\perp}(\partial_{\vartheta} \tg+\partial_{\vartheta} \tf^*_s)\circ\Phi^t_{H\circ \Psi_\Lambda}\right|_2+\left|\Pi_{\langle\Lambda\rangle^\perp}\{\partial_{\vartheta} [(f-\smooth{f})\circ\Psi_\Lambda]\}\circ\Phi^t_{H\circ \Psi_\Lambda}\right|_2\bigg) dt\\
		& \le \int_0^t \sup_{(\tI,\vartheta)\in \left(D_\Lambda\right)_{\frac{r_\Lambda}{32\xi}}^\R\times \T^n}\bigg(\left|(\partial_{\vartheta} \tf^*_s)\circ\Phi^t_{H\circ \Psi_\Lambda}\right|_2+\left|\{\partial_{\vartheta} [(f-\smooth{f})\circ\Psi_\Lambda]\}\circ\Phi^t_{H\circ \Psi_\Lambda}\right|_2\bigg) dt\\
		& \le  \sup_{(\tI,\vartheta)\in \left(D^\rho_{\Lambda,r_\Lambda}\right)_{\frac{r_\Lambda}{32\xi}}^\R\times \T^n}\bigg(\left|(\partial_{\vartheta} \tf^*_s)\right|_2+\left|\{\partial_{\vartheta} [(f-\smooth{f})\circ\Psi_\Lambda]\}\right|_2\bigg) |t|\ ,\\
		\end{split}
		\end{align}
		where the last inequality follows from the fact that $|t|\le\tau_e$ and, since the initial variables are confined in $D^\rho_{\Lambda,r_\Lambda}$, the normalized ones stay in $(D^\rho_{\Lambda,r_\Lambda})^\R_{\frac{r_\Lambda}{32\xi}}$ over the same time. 
		
		Since $|t|\le T_\Lambda \le \tau_e$, by the same arguments that were used in estimate \eqref{accroissements_finis} and estimate \eqref{accroissements_finis_2} we obtain
		\begin{align}\label{quarto}
		\begin{split}
			\left|\Pi_{\langle\Lambda\rangle^\perp}\big(\tI(t)-\tI(0)\big)\right|_2\ledot &\, |t|\left|\ln\left(\eps^{6(1+a\ell)}\right)\right|^{\ell-1} \eps^{1+a(\ell-1)}\\
			\ledot &\, \mathtt{T}_j\times \frac{r_\Lambda}{\left|\ln\left(\eps^{6(1+a\ell)}\right)\right|^{\ell-1} \eps^{1+a(\ell-1)}}\left|\ln\left(\eps^{6(1+a\ell)}\right)\right|^{\ell-1} \eps^{1+a(\ell-1)}= \frac{r_\Lambda}{4}
			\end{split}
		\end{align}
		by suitably choosing $\mathtt{T}_j$.
		
		Let us decompose the variation of the action variables as
		\begin{align}
		\begin{split}
		I(t)-I(0)=&I(t)-\tI(t)+\tI(t)-\tI(0)+\tI(0)-I(0)\\
		=& I(t)-\tI(t)+\Pi_{\langle\Lambda\rangle^\perp}\big(\tI(t)-\tI(0)\big)+\Pi_{\langle\Lambda\rangle}\big(\tI(t)-\tI(0)\big)+\tI(0)-I(0)\ ,
		\end{split}
		\end{align}
		so that estimate \eqref{quarto}, together with the size of the normal form, implies that, for $|t|\le T_\Lambda$, the motion orthogonal to the fast drift plane is bounded by 
		\begin{align}\label{Piemonte}
		\begin{split}
		|I(t)-I(0)-\Pi_{\langle\Lambda\rangle}\big(\tI(t)-\tI(0)\big)|_2\le & |I(t)-\tI(t)|_2+|\Pi_{\langle\Lambda\rangle^\perp}\big(\tI(t)-\tI(0)\big)|_2+|\tI(0)-I(0)|_2\\
		\le & \frac{r_\Lambda}{32\xi} + \frac{r_\Lambda}{4} + \frac{r_\Lambda}{32\xi}\le \frac34 r_\Lambda\ ,
		\end{split}
		\end{align}
		where we have used the fact that $\xi>1$. Hence, by \eqref{Piemonte}, $I(t)\in D^\rho_{\Lambda,r_\Lambda}$ since $I(0)\in D_\Lambda $ and the orbit lies entirely in this set for any $|t|\le T_\Lambda\le \tau_e$; moreover, the definition in \eqref{disc} implies
		$$
		I(t)\in\mathbf{D}^\rho_{\Lambda,\frac34 r_\Lambda}(I(0))\subset \mathbf{D}^\rho_{\Lambda, r_\Lambda}(I(0))\ .$$ 
		This fact, together with Lemma \ref{GCB1}, yields
		\begin{equation}
		|I(t)-I(0)|_2\le r_j \quad , \qquad \text{where} \qquad r_j:\circeq\  \displaystyle\frac{1}{K^{q_j/\alpha_j}}\ , \ \ r_j\gedot r_\Lambda\ .
		\end{equation}
		As it is shown in \cite{Guzzo_Chierchia_Benettin_2016} (formula (38)), a careful choice of the constants leads to
		$$
		\max_{j\in\{1,...,n-1\}}r_j< \rho(\eps)\ ,
		$$ 
		which concludes the proof of the first claim of this Lemma. 
		
		We now consider the second claim. In this case, for any time $t$ such that $|t|< |\tau_e|< T_\Lambda$ we can repeat the same arguments above and find $I(t)\in\mathbf{D}^\rho_{\Lambda,\frac34 r_\Lambda}(I(0))$. Then, by construction, the escape time satisfies
		\begin{equation}\label{chiusura}
		I(\tau_e)\in\text{closure}\big(\mathbf{D}^\rho_{\Lambda, \frac34 r_\Lambda}(I(0))\big) .
		\end{equation} Again, by Lemma \ref{GCB1}, this implies $|I(t)-I(0)|_2< \rho(\eps)$ for any $|t|<\tau_e< T_\Lambda$, so that, since $I(0)\in B_2\big(I(0),\tR(\eps)-(j+1)\rho(\eps)\big)$ one has
		\begin{equation}\label{palla}
		I(\tau_e)\in B_2\big(I(0),\tR(\eps)-j\rho(\eps)\big)\,.
		\end{equation} 
		
		Now, we shall prove that $I(\tau_e)\not\in Z_\Lambda$. By definition we have $I(\tau_e)\not\in D^\rho_{\Lambda,r_\Lambda}$ and, thanks to \eqref{blocco_esteso}, this means that there does not exist any action $ I^*\in D_\Lambda\cap B\big(I_0,\tR(\eps)-\rho(\eps)\big)$ such that $I(\tau_e)$ belongs to its disc $\mathbf{D}^\rho_{\Lambda,r_\Lambda}(I^*)$. Hence, by \eqref{disc}, $I(\tau_e)$ must satisfy at least one of the three following conditions:
		\begin{enumerate}
			\item $\not\exists I^*\in D_\Lambda\cap B_2\big(I_0,\tR(\eps)-\rho(\eps)\big):\, I(\tau_e)\in \bigcup_{I'\in I^*+\langle\Lambda\rangle}B_2(I',r_\Lambda)$;
			\item $I(\tau_e)\not\in Z_\Lambda$;
			\item  $I(\tau_e)\not\in B_2\big(I_0,\tR(\eps)-\rho(\eps)\big)$.
		\end{enumerate}
		By taking \eqref{chiusura} and \eqref{palla} into account, we see that the first and the third possibility cannot occur. Therefore, there must exist a maximal lattice $\Lambda'\neq\Lambda$ and a resonant zone $Z_{\Lambda'}$ such that $I(\tau_e)\in Z_{\Lambda'}$. Moreover,  Lemma \ref{GCB3}, insures that  $\dim\Lambda'\neq \dim\Lambda$ so that $I(\tau_e)\not\in Z_j$. The second decomposition in \eqref{covering} together with \eqref{palla} and \eqref{rhoerre} implies that $I(\tau_e)$ belongs to a resonant block of lower multiplicity, hence the claim. 
	\end{proof}
	\begin{rmk}
		The decompositions in \eqref{covering} are a covering of $B(I_0,\tR(\eps))$ but they are not a partition since, in general, $D_i\cap D_j\neq \varnothing $ for $j>i+1$. Hence, nothing prevents $I(\tau_e)$ from belonging to a resonant block of strictly higher multiplicity than the starting one. If this happens, however, thanks to the construction in \eqref{covering}, one is insured that $I(\tau_e)$ will also belong to another block associated to a lower order resonance. One therefore chooses the block in which to study the evolution of the actions once they leave the resonant zone they started at. This is at the core of the {\it resonant trap argument}, which is discussed in the sequel. 
	\end{rmk}
	\begin{proof}[Proof of Theorem \ref{MainTh3}]
		Theorem \ref{MainTh3} follows from Lemmas \ref{nr_stab_steep} and \ref{r_stab_steep}. Indeed, for any initial condition in the action variables $I_0\in B_\infty(0,R/4)$, we consider the ball $B_2(I_0,\tR(\eps))$ and the following dichotomy holds:
		\begin{enumerate}
			\item either $I_0$ belongs to the completely non-resonant domain $D_0^\rho$, in which case the proof ends here thanks to Lemma  \ref{nr_stab_steep};
			\item or for some $j\in\{1,...,n-1\}$ and some maximal $\Lambda\subset\Z^n_K$ of rank $j$, $I_0\in D_\Lambda\cap B\big(I_0,\tR(\eps)-(j+1)\rho(\eps)\big)$.
		\end{enumerate}
		In the second case, Lemma \ref{r_stab_steep} applies and one has another dichotomy:
		\begin{enumerate}
			\item either  $|I(t)-I(0)|_2\ledot  \rho(\eps):\circeq\eps^b$ over a time $T_\Lambda$; in this case the Theorem is proven since, taking into account the fact that the analyticity width in Lemma \ref{nr_stab_steep} satisfies $r\circeq \eps^{1/2}$, one has 
			\begin{equation}
			{\bf T}(\eps):= T_0:\circeq\frac{1}{\ |(1+a\ell)\ln\eps|^{\ell-1}\  \eps^{a(\ell-1)+1/2}}\circeq \frac{r}{\ |(1+a\ell)\ln\eps|^{\ell-1}\  \eps^{a(\ell-1)+1}}\ledot  \frac{\mathtt{T}_j\times r_\Lambda}{|\ln\eps^{6(1+a\ell)}|^{\ell-1}\eps^{a(\ell-1)+1}}\circeq :T_\Lambda\ ,
			\end{equation}
			where the last inequality is a consequence of the fact that, by \eqref{scegli}, \eqref{bidibibodibibu}, one can write
			$$
			r\le r_\Lambda \quad \longleftrightarrow \quad \frac{1}{K^{1+q_1}}\le \frac{1}{|\Lambda|K^{q_j}}
			$$
			and that, since $|\Lambda|\le K^j$, the stricter inequality
			$$ \frac{1}{K^{1+q_1}}\le \frac{1}{K^{j+q_j}}\quad\longleftrightarrow \quad 1+q_1 \ge j+q_j \quad \longleftrightarrow \quad p_1\ge p_j\ ,
			$$
			is trivially satisfied by the definition of $p_1$ and $p_j$, $j\in\{1,...,n-1\}$, in \eqref{parametrini} and by the fact that the steepness indices are always greater or equal than one.
			\item or the actions enter a resonant block $D_i\cap \bigg(B\big(I_0,\tR(\eps)-j\rho(\eps)\big)\bigg)$ corresponding to a resonant lattice of  dimension $i<j$ after having travelled a distance $\rho(\eps)$ over a time inferior to the time of escape. In this block, the above arguments can be repeated so that, after having possibly visited at most $n-1$ blocks, overall the actions can travel at most a distance $(n-1)\rho(\eps)$ before entering the completely non-resonant block, in which they are trapped for a time $T_0$ given by Lemma \ref{nr_stab_steep} and they travel for another length $\rho(\eps)$. Thanks to \eqref{rhoerre}, by construction one has  $|I(t)-I(0)|\le n\rho(\eps)=\frac12 \tR(\eps)\circeq \eps^b$.
		\end{enumerate}
		This is the so-called {\it resonant trap argument} and concludes the proof of Theorem \ref{MainTh3}, once one sets
		$$
		\mathtt{a}=a(\ell-1)+\frac12\quad,\qquad \mathtt{b}=b\ .
		$$
		
	\end{proof}

	\appendix
	\section{Smoothing estimates}\label{Fourier}
	
	\begin{lemma}
		\label{gigi}
		The derivatives of $K$ satisfy 
		\[\forall p\in \N, \quad \exists C_p\,: \abs{\partial^\beta K(x)} \le C_p \frac{e^{|\Im x|}}{(1 + |x|_2)^p},   \, \, \forall \,|\beta| \le p.  \]
		
	\end{lemma}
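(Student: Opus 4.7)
This is a classical Paley--Wiener type decay estimate for the Fourier--Laplace transform of a compactly supported smooth function, so the strategy is the standard one: differentiate under the integral sign and integrate by parts. Since $\Phi\in C^\infty_c(\R^n)$ (supported in $\ov B_2(0,1)$), differentiation under the integral is legitimate and for any multi-index $\beta$ one has
\[
\partial^\beta K(x) \;=\; \frac{1}{(2\pi)^n}\int_{\R^n} (i y)^\beta\,\Phi(y)\, e^{i y\cdot x}\,dy
\;=:\;\frac{1}{(2\pi)^n}\int_{\R^n}\Phi_\beta(y)\, e^{i y\cdot x}\,dy,
\]
where $\Phi_\beta(y):=(iy)^\beta\Phi(y)$ is still $C^\infty$ with support in $\ov B_2(0,1)$. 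For $|x|_2\le 1$ the trivial bound
\(
|\partial^\beta K(x)|\le (2\pi)^{-n}\|\Phi_\beta\|_{L^1}\,e^{|\Im x|}
\)
already gives the estimate, so we may assume $|x|_2\ge 1$.

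For $x\in\C^n$ with $|x|_2\ne 0$ I would then use the identity $e^{i y\cdot x}=|x|_2^{-2}(x\cdot(-i\nabla_y))e^{i y\cdot x}$ iteratively, or equivalently the componentwise identity $x_j\,e^{i y\cdot x}=-i\partial_{y_j}e^{i y\cdot x}$, to trade powers of $x$ for derivatives on $\Phi_\beta$. Concretely, for any multi-index $\gamma$ with $|\gamma|\le p$, integration by parts (no boundary terms since $\Phi_\beta$ is compactly supported) gives
\[
x^{\gamma}\,\partial^\beta K(x)\;=\;\frac{1}{(2\pi)^n}\int_{\R^n}\Phi_\beta(y)\,(-i\partial_y)^{\gamma} e^{i y\cdot x}\,dy
\;=\;\frac{i^{|\gamma|}}{(2\pi)^n}\int_{\ov B_2(0,1)} \partial^{\gamma}\Phi_\beta(y)\, e^{i y\cdot x}\,dy.
\]
Since $|e^{i y\cdot x}|=e^{-\Im(y\cdot x)}\le e^{|y|_2|\Im x|_2}\le e^{|\Im x|}$ on the support of $\Phi$ (this is precisely where $\Supp\Phi\subset\ov B_2(0,1)$ enters, giving the exponent with constant $1$), we obtain
\[
|x^\gamma\,\partial^\beta K(x)| \;\le\; \frac{\Vol(\ov B_2(0,1))}{(2\pi)^n}\,\|\partial^\gamma\Phi_\beta\|_{L^\infty}\,e^{|\Im x|}
\;\le\; C'_{p,\beta}\,e^{|\Im x|}.
\]

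Finally I combine these inequalities to get the polynomial decay: using $|x|_2\le \sqrt{n}\,|x|_\infty$ and choosing $\gamma$ so that $x^\gamma$ picks up the largest coordinate to the power $p$, one gets $|x|_2^{p}\,|\partial^\beta K(x)|\le C''_{p,\beta}\,e^{|\Im x|}$, and together with the trivial bound for $|x|_2\le 1$ this yields
\[
|\partial^\beta K(x)|\;\le\; C_p\,\frac{e^{|\Im x|}}{(1+|x|_2)^p},\qquad |\beta|\le p,
\]
with $C_p$ depending only on $n$, $p$ and finitely many $C^p$ seminorms of $\Phi$. There is no real obstacle here; the only bookkeeping point is to verify that the exponential prefactor is exactly $e^{|\Im x|}$ (with constant $1$), which relies crucially on $\Phi$ being supported in the closed unit ball $\ov B_2(0,1)$ — the hypothesis fixed in Section~\ref{smooth anello}.
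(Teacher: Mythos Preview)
Your argument is the standard Paley--Wiener computation and is correct; the only point worth flagging is that the exponent $e^{|\Im x|}$ with coefficient exactly $1$ indeed hinges on $\Supp\Phi\subset\ov B_2(0,1)$, which you correctly identify. Note that the paper does not actually prove this lemma: it simply refers the reader to \cite[Lemma~9]{Chierchia_2003}, where essentially the same integration-by-parts argument is carried out, so your approach coincides with the intended one.
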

	For the proof see {\cite[Lemma $9$]{Chierchia_2003}}.
	\begin{lemma}\label{F1}
		Let $f\in C_b^\ell(\A^n)$, with $\ell\geq 1$, and let $\sum_{k\in\Z^n}\hat f_k(I)e^{ik\cdot \teta}$ be its Fourier series. Then, for any fixed $k\in\Z^n\backslash \{0\}$, there exists a uniform constant $\jeanpierre(n,\ell)$ satisfying
		\begin{equation}
		\norma{\hat f_k}_{C^0(\R^n)}\le \jeanpierre(n,\ell) \frac{\norma{f}_{C^q(\A^n)}}{|k|^q}\ ,
		\end{equation}
		where $q:=\floor{\ell}$.
	\end{lemma}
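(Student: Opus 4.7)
The plan is to establish the decay of the Fourier coefficients by repeated integration by parts in the angular variable corresponding to the largest component of $k$, and then to translate the resulting bound involving $\norm{k}_\infty$ into one involving $|k|=|k|_1$ at the cost of a purely combinatorial constant.

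First, I would fix $k\in\Z^n\setminus\{0\}$ and pick an index $j=j(k)\in\{1,\ldots,n\}$ such that $|k_j|=\norm{k}_\infty$. By equivalence of norms on $\Z^n$ we have $|k|_1\le n\,\norm{k}_\infty$, i.e.\ $|k_j|\ge |k|_1/n$. Since $\ell\ge 1$ and $q:=\lfloor\ell\rfloor\le \ell$, Hölder spaces embed in the corresponding integer class, so $f\in C^q_b(\A^n)$ and in particular $\partial_{\teta_j}^q f$ is continuous and bounded on $\A^n$ with
\[
\sup_{(I,\teta)\in\A^n}|\partial_{\teta_j}^q f(I,\teta)|\le \norma{f}_{C^q(\A^n)}.
\]

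Next, starting from the definition
\[
\hat f_k(I)=\frac{1}{(2\pi)^n}\int_{\T^n} f(I,\teta)\,e^{-i k\cdot\teta}\,d\teta,
\]
I would integrate by parts $q$ times in the variable $\teta_j$. Each integration produces a factor $1/(ik_j)$ and moves one derivative onto $f$; the boundary terms vanish by $2\pi$-periodicity of $f$ and of $e^{-ik\cdot\teta}$ in $\teta_j$. The result is
\[
\hat f_k(I)=\frac{1}{(2\pi)^n (i k_j)^q}\int_{\T^n}\partial_{\teta_j}^q f(I,\teta)\,e^{-i k\cdot\teta}\,d\teta.
\]

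Taking absolute values, bounding the integrand by $\norma{f}_{C^q(\A^n)}$ and using $|k_j|\ge |k|_1/n$, I would obtain
\[
|\hat f_k(I)|\le \frac{\norma{f}_{C^q(\A^n)}}{|k_j|^q}\le \frac{n^q\,\norma{f}_{C^q(\A^n)}}{|k|_1^q},
\]
uniformly in $I\in\R^n$. Setting $\jeanpierre(n,\ell):=n^{\floor{\ell}}$ yields the claim. The only point requiring minimal care is the choice of the ``good'' direction $j$ so as to reach the stated $\ell^1$-norm on $k$; the argument involves no analytic difficulty.
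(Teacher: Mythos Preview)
Your proof is correct and follows essentially the same approach as the paper: pick the coordinate direction $j$ where $|k_j|=\norm{k}_\infty$, put all $q$ derivatives in that direction (you via explicit integration by parts, the paper via the identity $(\widehat{\partial^j f})_k=(ik)^j\hat f_k$), and then convert the $\norm{k}_\infty$ bound into an $\ell^1$ bound using $\norm{k}_\infty\ge |k|_1/n$. Your constant $n^{\lfloor\ell\rfloor}$ is in fact slightly sharper than the $n^\ell$ written in the paper, but both depend only on $(n,\ell)$ as required.
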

	\begin{proof}
		Fix a multi-index $j=(j_1,...,j_n)\in\N^n$ such that $|j|_1 \le q:=\floor{\ell}$, one obviously has
		\begin{equation}
		\partial^j_\teta f(I,\teta)=\sum_{k\in\Z^n}(i)^{|j|} k_1^{j_1}...k_n^{j_n}\hat f_k(I)e^{ik\cdot\teta}\,.
		\end{equation}	
		From
		\begin{equation}
		\partial^j_\teta f(I,\teta):=\sum_{k\in\Z^n}(\widehat{\partial^jf})_k(I)e^{ik\cdot\teta}\ ,
		\end{equation}
		and by the unicity of Fourier's coefficients one also has
		
		\begin{equation}\label{fourier_coef}
		\hat f_k(I):=\frac{(\widehat{\partial^jf})_k(I)}{k_1^{j_1}...k_n^{j_n}}\ .
		\end{equation}
		As in expression \eqref{fourier_coef} the multi-index $j\in \Z^n$ is arbitrary, for each value of $k\in\Z^n\backslash\{0\}$ we can choose $j$ so that 
		\begin{equation}\label{c5}
		\hat f_k(I) =\frac{(\widehat{\partial^jf})_k(I)}{(\max_{i=1,...,n}\{k_i\})^{|j|}}\ .
		\end{equation}
		Moreover, for any $k\in\Z^n\backslash\{0\}$ one has the trivial inequality $$
		\max_{i=1,...,n}\{|k_i|\}\geq \frac{|k|}{n}\ .
		$$
		This, together with \eqref{c5} and the choice $|j|=q$ yields{ 
			\begin{equation}
			|\hat f_k(I)| =n^\ell\frac{|(\widehat{\partial^jf})_k(I)|}{|k|^{q}}=n^\ell\frac{1/(2\pi)^n|\int_0^{2\pi}\partial^j f(I,\teta)e^{\im k\cdot\teta}d\teta|}{|k|^{q}}\le n^\ell\frac{|\partial^j f(I,\teta)|}{|k|^{q}}\ ,
			\end{equation}}
		which, once the supremum over the actions is taken, implies the result.
	\end{proof}

	\section{Normal form}\label{loch_app}
	Given a function $F$ in $\ciccio{r,s}$, the notations $\mathcal{P}_\Lambda$ and $\mathcal{P}_K$ stand for the projections
	$$
	\mathcal{P}_\Lambda F(I,\teta):=\sum_{k\in\mathbb{Z}^n:k\in\Lambda}F_k(I)e^{ik\cdot\teta}\ ,\ \ \mathcal{P}_K F(I,\teta) :=\sum_{k\in\mathbb{Z}^n:|k|_1\leq K}F_k(I)e^{ik\cdot\teta}\,
	$$
	
	Accordingly with our notations, we state here the result of P\"oschel \cite{Poschel_1993}.
	\begin{lemma}[Poschel's normal form]\label{poschel normal form}
		Let $\varrho,\sigma >0$ and $ {\bf H} (I,\teta)={\bf h}(I)+{\bf f}(I,\teta)$ be analytic on 
		$$
		\mathcal{D}_{\Lambda,\varrho,\sigma}:=\{(I,\teta)\in\C^n: \abs{I-D_\Lambda}_2<\varrho\ ,\ \  \teta\in\T^n_\sigma\}
		$$
		where $D_\Lambda$ is $(\alpha,K)$-nonresonant modulo $\Lambda$ with respect to the integrable Hamiltonian $\bf h$. 
		Also, let $M$ denote the hermitian norm of the hessian of ${\bf h}$ over $\mathcal{D}_{\Lambda,\varrho,\sigma}$. 
		
		If, for some $\varrho'>0$, one is insured
		\begin{equation}\label{soglie_Poschel}
		||{\bf{f}}||_{\varrho,\sigma}\le \epsilon \le \frac{1}{256\xi }\frac{\alpha \varrho'}{K}, \qquad \varrho'\le\left(\varrho,\frac{\alpha}{2\xi MK}\right)
		\end{equation}
		for some  $\xi > 1$ and 
		\begin{equation}\label{soglie_Poschel2}
		K\sigma\ge 6,
		\end{equation}
		then there exists a real-analytic,  symplectic transformation $\Psi:\mathcal{D}_{\Lambda,\varrho'/2,\sigma/6}\longrightarrow \mathcal{D}_{\Lambda,\varrho,\sigma}$ taking $\bf H$ into resonant normal form, that is 
		\begin{equation}
		{\bf H}\circ\Psi={\bf h}+{\bf g}+{\bf f^*}\ ,\ \ \{{\bf h},{\bf g}\}=0\ .
		\end{equation}
		Moreover,
		denoting by ${\bf g_0}:=P_\Lambda P_K{\bf f}$ the resonant part of $\bf f$, we have the estimates 
		\begin{equation}
		||{\bf g-g_0}||_{\varrho'/2,\sigma/6}\le 64 \frac{K}{\alpha\varrho' }\epsilon^2\ ,\ \  ||{\bf f^*}||_{\varrho'/2,\sigma/6}\le e^{-K\sigma/6}\epsilon.
		\end{equation}
		Furthermore, $\Psi$ is close to the identity, in the sense that, for any $(I,\teta)\in \mathcal{D}_{\Lambda,\varrho'/2,\sigma/6}$, one has
		\begin{equation}\label{taglia_trasf}
		\frac{\abs{\Pi_I \Psi-I}_2}{\rho'}\le 2^3 \frac{K}{\alpha\rho'}\epsilon\le \frac{1}{32\xi}\quad ,\qquad \frac{\abs{\Pi_\teta \Psi-\teta}_\infty}{\sigma}\le \frac{2^5K}{3\alpha\rho' }\epsilon \le \frac{1}{24\xi} 
		\end{equation}
		where $\Pi_I,\Pi_\teta$ denote the projection on the action and angle variables, respectively.
	\end{lemma}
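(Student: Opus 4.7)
\textbf{Proof proposal for Lemma \ref{poschel normal form}.}

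The plan is to construct $\Psi$ as a composition of an infinite sequence of canonical transformations $\Psi = \lim_{N\to\infty} \Psi_0\circ\Psi_1\circ\cdots\circ\Psi_N$, each $\Psi_\nu$ obtained as the time-1 map of a Hamiltonian $\chi_\nu$ solving a \emph{truncated} homological equation. At step $\nu$, starting from ${\bf H}_\nu = {\bf h} + {\bf g}_\nu + {\bf f}_\nu$ (with ${\bf g}_0 = 0$, ${\bf f}_0 = {\bf f}$ and $\{{\bf h}, {\bf g}_\nu\}=0$, ${\bf g}_\nu\in\mathcal{P}_\Lambda\mathcal{P}_{K_{\nu-1}}$), one splits $\mathcal{P}_{K_\nu}{\bf f}_\nu = \mathcal{P}_\Lambda \mathcal{P}_{K_\nu}{\bf f}_\nu + (I-\mathcal{P}_\Lambda)\mathcal{P}_{K_\nu}{\bf f}_\nu$, updates ${\bf g}_{\nu+1} = {\bf g}_\nu + \mathcal{P}_\Lambda\mathcal{P}_{K_\nu}{\bf f}_\nu$, and solves
\[
\{{\bf h}, \chi_\nu\} + (I-\mathcal{P}_\Lambda)\mathcal{P}_{K_\nu}{\bf f}_\nu = 0
\]
by setting $\chi_\nu = \sum_{k\notin\Lambda,\,|k|_1\leq K_\nu} \frac{\hat{\bf f}_{\nu,k}(I)}{i\, k\cdot\omega(I)}\,e^{ik\cdot\teta}$. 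The $(\alpha,K)$-nonresonance of $D_\Lambda$ modulo $\Lambda$ gives $|k\cdot\omega(I)|\geq\alpha$ on the relevant action domain for all $k\notin\Lambda$, $|k|_1\leq K$, and this divisor estimate persists on a slightly shrunk action neighborhood as long as $\varrho'\leq\alpha/(2\xi M K)$ (this is where the Hessian bound $M$ enters).

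For the quantitative step, I choose a geometric sequence of cutoffs $K_\nu$ with $K_0$ small and $K_\nu\nearrow K$, together with geometric losses of analyticity $\s_\nu,\varrho_\nu$ satisfying $\sum_\nu \s_\nu = 5\s/6$, $\sum_\nu \varrho_\nu = \varrho - \varrho'/2$. On $\ciccio{\varrho-\sum_{\mu\leq\nu}\varrho_\mu,\,\s-\sum_{\mu\leq\nu}\s_\mu}$, the weighted Fourier estimate \eqref{norme Fouriertiche} combined with the small divisor bound yields
\[
\normf{\chi_\nu}{\cdot}{\cdot} \lesssim \frac{1}{\alpha}\,\normf{{\bf f}_\nu}{\cdot}{\cdot},
\qquad
\normf{\d_\teta\chi_\nu}{\cdot}{\cdot}\lesssim \frac{K_\nu}{\alpha}\normf{{\bf f}_\nu}{\cdot}{\cdot},
\]
and Cauchy in the action direction gives $\normf{\d_I\chi_\nu}{\cdot}{\cdot}\lesssim\tfrac{1}{\alpha\varrho_\nu}\normf{{\bf f}_\nu}{\cdot}{\cdot}$. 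The Lie series expansion ${\bf H}_{\nu+1} = {\bf H}_\nu\circ\Phi^1_{\chi_\nu}$ then produces
\[
{\bf f}_{\nu+1} = (I-\mathcal{P}_{K_\nu}){\bf f}_\nu + \int_0^1 \{(1-t)(I-\mathcal{P}_\Lambda)\mathcal{P}_{K_\nu}{\bf f}_\nu + t\,{\bf f}_\nu + {\bf g}_\nu,\chi_\nu\}\circ\Phi^t_{\chi_\nu}\,dt,
\]
whose standard Poisson-bracket / high-mode estimates, combined with $K_\nu\s_\nu\gtrsim 1$, yield the quadratic-type inequality $\normf{{\bf f}_{\nu+1}}{}{}\lesssim \tfrac{K_\nu}{\alpha\varrho_\nu}\normf{{\bf f}_\nu}{}{}^2 + e^{-K_\nu \s_\nu}\normf{{\bf f}_\nu}{}{}$.

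With these estimates in hand, the smallness hypothesis $\epsilon\leq\tfrac{\alpha\varrho'}{256\xi K}$ guarantees that $\normf{{\bf f}_\nu}{}{}\to 0$ super-exponentially in $\nu$, that the total shift produced by the composed flow satisfies $|\Pi_I\Psi - I|_2\leq \sum_\nu|\Pi_I\Phi^1_{\chi_\nu}-I|_2\lesssim \tfrac{K}{\alpha\varrho'}\epsilon\,\varrho'\leq\tfrac{\varrho'}{32\xi}$ (and similarly $\tfrac{1}{24\xi}$ for the angles, using $K\s\geq 6$), so $\Psi$ maps $\ciccio{\varrho'/2,\s/6}$ into $\ciccio{\varrho,\s}$. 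The final resonant part is ${\bf g} = \lim_\nu {\bf g}_\nu = \mathcal{P}_\Lambda\mathcal{P}_K{\bf f} + \text{(quadratic corrections)}$, giving the estimate $\normf{{\bf g}-{\bf g}_0}{\varrho'/2}{\s/6}\lesssim \tfrac{K}{\alpha\varrho'}\epsilon^2$, while ${\bf f}^* = \lim_\nu{\bf f}_\nu$ reduces to the high-frequency tail $(I-\mathcal{P}_K){\bf f}$ plus super-small corrections, which by the standard Fourier estimate $\normf{(I-\mathcal{P}_K)g}{\cdot}{\s/6}\leq e^{-K\s/6}\normf{g}{\cdot}{\s}$ yields $\normf{{\bf f}^*}{\varrho'/2}{\s/6}\leq e^{-K\s/6}\epsilon$.

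The principal obstacle is the bookkeeping: one must choose the sequences $(K_\nu, \s_\nu, \varrho_\nu)$ so that (i) the iterative smallness $\epsilon_\nu\leq \tfrac{\alpha\varrho_\nu}{K_\nu}\cdot\text{const}$ is preserved, (ii) the divisor estimate survives when the action domain shrinks by at most $\varrho'$ (forcing $\varrho'\leq\alpha/(2\xi M K)$), and (iii) the total budget $\sum\s_\nu\leq 5\s/6$, $\sum\varrho_\nu\leq\varrho-\varrho'/2$ is respected while producing precisely the stated numerical constants $256\xi,32\xi,24\xi$; these follow from a single standard but delicate tuning (as in \cite{Poschel_1993}) that I would carry out by taking $K_\nu\nearrow K$ dyadically and $\s_\nu,\varrho_\nu$ proportional to geometric sequences summing to the prescribed totals.
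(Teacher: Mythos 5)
The paper does not prove this lemma; it is stated in Appendix~\ref{loch_app} purely by citation of \cite{Poschel_1993} (``Accordingly with our notations, we state here the result of P\"oschel''). So the comparison is really between your sketch and P\"oschel's original argument, and these are genuinely different schemes.

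P\"oschel's proof fixes the ultraviolet cutoff $K$ once and for all, and then runs a \emph{finite} number $T\sim K\sigma$ of steps of a \emph{linear} iteration: his Iterative Lemma shows that one normalization step with the full cutoff $K$ shrinks the nonresonant part by a fixed factor (roughly $1/2$) at the cost of a small, evenly-budgeted loss of analyticity width; iterating $T$ times gives the $e^{-K\sigma/6}$ bound from the geometric decay $\sim 2^{-T}$. There is no sequence of growing cutoffs and no quadratic (Newton) convergence anywhere. By contrast, you propose an infinite composition with geometrically increasing cutoffs $K_\nu\nearrow K$ and an iteration inequality of the quadratic KAM type $\epsilon_{\nu+1}\lesssim (K_\nu/\alpha\varrho_\nu)\epsilon_\nu^2 + e^{-K_\nu\sigma_\nu}\epsilon_\nu$, and then invoke super-exponential convergence. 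Besides the structural mismatch, this raises two concrete difficulties. First, the claimed super-exponential decay cannot persist: once $\epsilon_\nu$ drops below $e^{-K_\nu\sigma_\nu}$ the ``tail'' term dominates and the scheme becomes linear; and if $\sigma_\nu$ is chosen as a geometric sequence summing to $5\sigma/6$ while $K_\nu$ grows geometrically, the products $K_\nu\sigma_\nu$ are of constant order, so $e^{-K_\nu\sigma_\nu}$ is bounded away from zero and only a geometric decay is possible — the increasing-cutoff idea buys you nothing here, whereas P\"oschel's uniform-cutoff iteration is tailored precisely so that the $T$-step product yields $e^{-K\sigma/6}$. Second, the explicit numerical constants ($256\xi$, $64$, $2^3/(32\xi)$, $2^5/(3\cdot 24\xi)$ \dots) are artifacts of P\"oschel's particular budget allocation (fixed $K$, $T$ steps, equal losses of $\varrho/T$ and $\sigma/T$ per step) and will not emerge from your dyadic/geometric allocation without a full reworking; your last paragraph acknowledges this but treats it as ``standard tuning,'' which understates the issue since the constants are part of the statement being proved. (Minor point: the Lie-series formula for $\mathbf{f}_{\nu+1}$ you write down has a sign/coefficient pattern that does not match the usual Taylor-with-integral-remainder expansion of $H\circ\Phi^1_{\chi_\nu}$; the term from the homological equation should enter with a minus sign inside the $(1-t)$ remainder. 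This is a bookkeeping slip, not a conceptual one.) If you wish to actually reproduce Lemma~\ref{poschel normal form} as stated, the most economical route is to follow P\"oschel's finite linear iteration with fixed $K$; the scheme you outline proves a lemma of the same \emph{type} but not this lemma with these constants.
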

	
\gr{Declarations}. Data sharing not applicable to this article as no datasets were generated or analyzed during the current study.

\noindent
Conflicts of interest: The authors have no conflicts of interest to declare.

\end{document}